\documentclass[11pt]{article}
\usepackage[T1]{fontenc}
\usepackage{amsmath}
\usepackage{amsfonts}
\usepackage{amssymb}
\usepackage{amsthm}
\usepackage{graphicx}
\graphicspath{{img/}}
\usepackage{booktabs}
\usepackage{multicol}
\usepackage{bbm}
\usepackage[ruled,vlined]{algorithm2e}
\usepackage{adjustbox}
\usepackage{multirow}
\usepackage{subfig}
\usepackage{pgfplots}
\usepackage[breaklinks=true]{hyperref}
\usepackage{breakcites}

\newtheorem{theorem}{Theorem}
\newtheorem{remark}{Remark}
\newtheorem{corollary}{Corollary}
\newtheorem{definition}{Definition}
\newtheorem{proposition}{Proposition}
\newtheorem{example}{Example}

\pgfplotsset{compat=1.17}
\usetikzlibrary{calc}

\usepackage[margin=1.00in]{geometry}

\newcommand{\laplace}[2]{\mathcal{L}_{#1}(#2)}
\newcommand{\diff}{\, \mathrm{d}}

\title{Risk aggregation with FGM copulas}

\author{Christopher Blier-Wong\thanks{Corresponding author, \href{mailto:chblw@ulaval.ca}{chblw@ulaval.ca}},
Hélène Cossette and Etienne Marceau\\ École d'actuariat, Université Laval, Québec, Canada}

\date{\today}

\begin{document}

\maketitle

\begin{abstract}
We offer a new perspective on risk aggregation with FGM copulas. Along the way, we discover new results and revisit existing ones, providing simpler formulas than one can find in the existing literature. This paper builds on two novel representations of FGM copulas based on symmetric multivariate Bernoulli distributions and order statistics. First, we detail families of multivariate distributions with closed-form solutions for the cumulative distribution function or moments of the aggregate random variables. We order aggregate random variables under the convex order and provide methods to compute the cumulative distribution function of aggregate rvs when the marginals are discrete. Finally, we discuss risk-sharing and capital allocation, providing numerical examples for each. 
\end{abstract}

\textbf{Keywords}: Stochastic representation, mixed Erlang distributions, stochastic order, order statistics

\section{Introduction}

Insurance companies deals with a large number of heterogeneous and possibly dependent losses. For enterprise risk management purposes, it is important to understand the risks in one's portfolio at the individual level, but also at the company-wide level. For this reason, one is interested in the aggregate risk of the portfolio. 

In this paper, we aim to provide a comprehensive treatment of risk aggregation of positive random variables (rvs) when the dependence structure is a Farlie-Gumbel-Morgenstern (FGM) copula. The family of FGM copulas has a long history in copula theory (see, for instance, \cite{johnson1975generalized}, \cite{cambanis1977some}, \cite[Chapter 5]{kotz2001correlation}, \cite[Section 44.10]{kotz2004continuous},\cite{nelsen2007introduction}, \cite{durante2015principles}). The family of FGM copulas is a popular copula since its simple shape enables analytic results, see, for instance, \cite{genest2007everything}. One finds applications of the FGM family of copulas in actuarial science (for instance, \cite{cossette2008compound, barges2009tvar, barges2011moments, cossette2012tvarbased, cossette2013multivariate, woo2013note, chadjiconstantinidis2014renewal}). A FGM copula admits weak dependence, both positive and negative. For instance, the range of bivariate Spearman's rho for FGM copulas is $[-1/3, 1/3]$. 

Within the context of a large portfolio of diversified insurance risks, one does not expect to observe high dependence across every risk. Indeed, important conditions for insurability includes having a large number of similar exposure units, and a limited exposure to catastrophically large losses. An insurance company would actively avoid insuring two risks that exhibit significant positive dependence. For this reason, many insurance companies limit their exposures in regions where a single event could cause multiple claims. However, insurers do not refuse a risk simply because they have another risk that is positively correlated with a potential customer; weak positive dependence may be acceptable within the underwriting guidelines of an insurance company. A FGM copula therefore seems appropriate for a large portfolio of insurance risks because one expects underwriters to limit positive dependence, and a FGM copula lets one select flexible dependence structures between risks within the portfolio, under parameter constraints that the underlying FGM copula exists.

Risk aggregation under FGM dependence has already been studied in the actuarial science literature, (see, for instance, \cite{barges2009tvar, cossette2013multivariate, cossette2015two, navarro2020copula}), but we consider the problem from a new perspective. In the past, FGM copulas did not have a genuine probabilistic interpretation (see, for instance, \cite{durante2012method}). This paper builds on two alternate representations of the FGM family of copulas that provide probabilistic interpretations. The first representation provides a method to construct FGM copulas, interpret the copula parameters, and enables the stochastic comparison of different FGM copulas. The second representation, for a given set of copula parameters, leads to new results on risk aggregation, and rediscovers some that were cited in the literature above. 
\begin{itemize}
	\item The first representation is based on a one-to-one correspondence between the class of FGM copulas and symmetric multivariate Bernoulli random vectors, explored in \cite{blier-wong2022stochastic}. By constructing a $d$-variate FGM copula from a $d$-variate symmetric multivariate Bernoulli random vector $\boldsymbol{I}$, we will see in Section \ref{sec:prelim} that the dependence structure of $\boldsymbol{I}$ governs the dependence structure of the FGM copula. One significant advantage of this representation is that the dependence structure of Bernoulli rvs are easier to interpret than a set of $2^d - d - 1$ central mixed moments between $k$-tuples, for $2 \leq k \leq d$, which is what one has with the natural formulation of the FGM copula. Another advantage of this representation is that it enables one to answer such questions as (i) what is the most positive and negative dependence structure attainable under FGM dependence; (ii) what is the effect of increasing a certain dependence parameter on the resulting aggregate distribution; (iii) how are two aggregate distributions with different FGM copulas ordered under the convex order. It turns out that trying to answer these questions using the natural representation of the FGM copula is tedious, but becomes simple when using the stochastic representation. 
	\item The second representation is based on order statistics. In \cite{baker2008orderstatisticsbased}, the author constructs multivariate distributions based on order statistics, and finds that the simplest case 
	consisting of mixing the order statistics from two independent and identically distributed (iid) rvs corresponds to a FGM distribution. See also Section 8.3 of \cite{bladt2017matrixexponential} for construction of multivariate models based on order statistics. It follows that if the order statistics of the marginal distributions have convenient forms, the aggregate distribution of the risks under FGM dependence may also have convenient forms. Instead of approaching the problem or risk aggregation from a purely mathematical point of view, we approach it using a probabilistic argument that simplifies the formulas and provides a more straightforward interpretation of the resulting expressions. 
\end{itemize}

While FGM copulas only admit a moderate strength of dependence, we show that the dependence structure still has a significant impact on the distribution of the aggregate rv. Another advantage is that FGM copulas admit a wide variety of shapes, that is, a $d$-dimensional copula has $2^d - d - 1$ copula parameters, each parameter controls the moments between $k$-tuples of the random vector, for $k \in \{2, \dots, d\}$. Hence, we may study the effect of mild negative and positive dependence on the behaviour of the aggregate rv within the family of FGM copulas. Also, FGM copulas are the most simple case of Bernstein copulas, introduced in \cite{sancetta2004bernstein}. Bernstein copulas of dimension $d$ are interesting from a practical point of view since they are dense on the hypercube $[0, 1]^d$. It follows that one may use Bernstein copulas to approximate other types of copulas. The results from this paper, covering FGM copulas, consist of important groundwork to study risk aggregation under a dependence structure induced by Bernstein copulas. See, e.g., \cite{marri2021risk} for related research for risk aggregation with mixed Bernstein copulas. 

The remainder of this paper is structured as follows. In Section \ref{sec:prelim}, we provide the preliminary notions of copulas and order statistics required for the main results of the paper. Section \ref{sec:general} outlines the general method to identify the Laplace-Stieltjes transform or the $m$th moments $m \in \mathbb{N}_1$, for the aggregate rv. In Section \ref{sec:continuous}, we develop closed-form expressions for the cdf and $m$th moments for some continuous rvs. We then deal with stochastic orders in Section \ref{sec:stochastic-order}, identifying the lower and upper bounds of the aggregate rv under the convex order for the special case of exchangeable FGM copulas. Section \ref{sec:discrete} details a method to compute the probability mass function (pmf) of the aggregate rv when each marginal is a discrete rv. In Section \ref{sec:approx}, we propose a method to approximate the cumulative distribution function (cdf) of continuous rvs using discretization methods, and construct bounds for the risk measures of the aggregate rv by their discrete counterparts. Section \ref{sec:allocation} discusses TVaR-based risk allocation when the marginals are mixed Erlang rvs. In Section \ref{sec:conclusion}, we discuss the results and present some openings to further research.

\section{Preliminaries}\label{sec:prelim}

We begin by introducing general notation. Let $\boldsymbol{x}$ denote a vector $(x_1, \dots, x_d) \in \mathbb{R}^d$. All expressions such as $\boldsymbol{x} + \boldsymbol{y}$, $\boldsymbol{x} \times \boldsymbol{y}$ and $\boldsymbol{x} \leq \boldsymbol{y}$ represent component-wise operations. Let $\boldsymbol{X}$ represent a random vector on $\mathbb{R}_+^d$ with joint cdf $F_{\boldsymbol{X}}$ with $F_{\boldsymbol{X}}(\boldsymbol{x}) = \Pr(X_1 \leq x_1, \dots, X_d \leq x_d)$. Define also the Laplace-Stieltjes transform (LST) as $\mathcal{L}_{\boldsymbol{X}}(\boldsymbol{t}) = E\left[\exp\left\{-(t_1X_1  + \dots + t_dX_d)\right\}\right]$, for $\boldsymbol{t} \in \mathbb{R}_+^d$. Also, for a univariate cdf $F_X$, we define the generalized inverse $F_X^{-1}(u) = \inf \left\{x \in \mathbb{R}, F_X(x) \geq u\right\}$. Let the cdf of a symmetric Bernoulli distribution be denoted by $F_I(x) = 0.5 \times 1_{[0, \infty)}(x) + 0.5 \times 1_{[1, \infty)}(x), x \geq 0$, where $1_{A}(x) = 1$, if $x \in A$ and 0, otherwise. We denote $\mathcal{B}_d$ as the Fréchet class with univariate marginals $F_{I}$. Finally, we let $\mathbb{N}_0$ be the set of non-negative integers and $\mathbb{N}_1 = \mathbb{N}_0\setminus \{0\}$ be the set of strictly positive integers.

\subsection{Order statistics}

This paper leverages the order statistic representation of the FGM copula, presented in \cite{baker2008orderstatisticsbased} and revisited in Section 8.3 of \cite{bladt2017matrixexponential} and more recently in \cite{blier-wong2022stochastic}. The current section provides preliminary results for order statistics. The interested reader can refer to the standard references on order statistics, for example \cite[Section 5.4]{casella2002statistical}, \cite{david2003order} and \cite{arnold2008first}, for more details. 

Let $(X_1, X_2)$ be a vector of two continuous iid rvs with marginal cdf $F_X$ and probability density function (pdf) $f_X$. Define the vector $(X_{[1]}, X_{[2]})$ as the vector of order statistics of $(X_1, X_2)$, that is, $X_{[1]} = \min (X_1, X_2)$ and $X_{[2]} = \max (X_1, X_2)$. The cdfs and pdfs of the order statistics are 
\begin{align}
	F_{X_{[1]}}(x) &= 1 - \overline{F}_X(x)^{2}; \quad f_{X_{[1]}}(x) = 2\overline{F}_X(x)f_X(x); \label{eq:cdf-pdf-min}\\
	F_{X_{[2]}}(x) &= F_X(x)^{2}; \quad f_{X_{[2]}}(x) = 2F_{X}(x)f_{X}(x),\label{eq:cdf-pdf-max}
\end{align}
where $x$ takes values in the same support as those of $F_X$ or $f_X$. 

The following example presents the well known order statistics of exponential rvs, first derived in \cite{renyi1953theory}.
\begin{example}\label{ex:exp-order-stats}
	Let $(X_{1}, X_{2})$ be two independent and exponentially distributed rvs with mean $1/\beta$. Let $Z_i, i \in \{1, 2\}$ be independent exponentially distributed rvs with mean 1. The associated order statistics $X_{[1]}$ and $X_{[2]}$ admit the representation
	\begin{align*}
		X_{[1]} = \min(X_{1}, X_{2}) \overset{\mathcal{D}}{=}& \frac{Z_1}{2\beta};\\
		X_{[2]} = \max(X_{1}, X_{j, 2}) \overset{\mathcal{D}}{=}& \frac{Z_1}{2\beta} + \frac{Z_2}{\beta},
	\end{align*}
	where $\overset{\mathcal{D}}{=}$ means equality in distribution. It follows that $X_{[1]} \sim Exp(2\beta)$ and that $X_{[2]}$ follows a generalized Erlang distribution with parameters $\beta$ and $2\beta$.
\end{example}

Another useful representation of order statistics, due to \cite{scheffe1945non}, is
\begin{equation}\label{eq:order-stat-prob-int-transform}
    (X_{[1]}, X_{[2]}) \overset{\mathcal{D}}{=} \left(F^{-1}_{X}(U_{[1]}), F^{-1}_{X}(U_{[2]})\right).
\end{equation}
One can also write the pdf of two order statistics as 

\begin{equation}\label{eq:pdf-order-stat}
	f_{X_{[j]}}(x) = 2F_{X}(x)^{j-1}\overline{F}_X(x)^{2-j}f_{X}(x) = (-1)^{j}2f_{X}(x)F_{X}(x) + (2-j)f_{X}(x),\\
\end{equation}
for $j \in \{1, 2\}$. From the second equality in \eqref{eq:pdf-order-stat}, we have 
\begin{equation}\label{eq:pdf-order-stat2}
	f_{X_{[j]}}(x) = (-1)^{j}f_{X_{[2]}}(x) + 2(2-j)f_{X}(x), \quad j \in \{1, 2\}.
\end{equation}
Define $\mu_{X_{[j]}}^{(m)}$ as the $m$th moment of the $j$th order statistic of $X$. From \eqref{eq:pdf-order-stat2}, we have
\begin{equation}\label{eq:moment-m}
	\mu_{X_{[j]}}^{(m)} = (-1)^{j}\mu_{X_{[2]}}^{(m)} + 2(2-j)E[X^{m}], \quad m \in \mathbb{N}_0.
\end{equation}
Replacing $j = 1$ in \eqref{eq:moment-m} we have that the relationship between moments of order statistics is
\begin{equation}\label{eq:order-statistic-moment-relationship}
	E[X^{m}] = \frac{1}{2} \left(\mu_{X_{[1]}}^{(m)} + \mu_{X_{[2]}}^{(m)}\right), \quad m \in \mathbb{N}_0.
\end{equation}

In this paper, we will construct dependent random vectors by defining their joint cdfs with copulas. Standard references on copula theory include, for example, \cite{kotz2001correlation}, \cite{trivedi2006copula}, \cite{nelsen2007introduction}, \cite{mai2014financial}, \cite{joe2014dependence}, or \cite{durante2015principles}. Copulas are multivariate cdfs whose marginals are uniformly distributed on the interval $[0, 1]$, and the copula studied in this paper is constructed by pairs of order statistics. It is therefore useful to recall that for the special case where $(U_1, U_2)$ is a pair of iid uniform rvs, then the order statistics $U_{[1]}$ and $U_{[2]}$ satisfy $U_{[j]} \sim Beta(j, 3 - j)$, for $j \in \{1, 2\}$.

\subsection{FGM copulas}

In this paper, we focus on FGM copulas, whose expression is given by
\begin{equation} \label{eq:fgm-copula}
	C\left( \boldsymbol{u}\right) =\prod_{k=1}^d u_k \left( 1+\sum_{k=2}^{d}\sum_{1\leq j_{1}<\cdots <j_{k}\leq d}\theta_{j_{1}\ldots j_{k}}\overline{u}_{j_{1}}\overline{u}_{j_{2}}\ldots \overline{u}_{j_{k}}\right), \quad \boldsymbol{u}\in [0,1]^{d},
\end{equation}
where $\overline{u}_{j}=1-{u}_{j}$, for $j \in \{1,\dots,d\}$. We note $\mathcal{C}_d^{FGM}$ as the class of $d$-variate FGM copulas. The constraints on the parameters for the copula in \eqref{eq:fgm-copula}, as derived by \cite{cambanis1977some}, are
\begin{equation}\label{eq:constraints-general}
	\mathcal{T}_d = \left\{(\theta_{12}, \dots, \theta_{1\dots d}) \in \mathbb{R}^{2^d - d - 1} : 1+\sum_{k=2}^{d}\sum_{1\leq j_{1}<\dots <j_{k}\leq d}\theta_{j_{1}\dots j_{k}}\varepsilon _{j_{1}}\varepsilon _{j_{2}}\dots
	\varepsilon_{j_{k}}\geq 0\right\},  
\end{equation}
for $\{\varepsilon_{j_{1}},\varepsilon_{j_{2}},\dots, \varepsilon_{j_{k}}\} \in \{-1,1\}^d$ and $k\in \{2, \dots, d\}$. When $d=2$, \eqref{eq:fgm-copula} becomes the well-known expression of the bivariate FGM copula with one parameter and given by
\begin{equation} \label{eq:biv-fgm-copula} 
	C(u_{1},u_{2}) = u_1 u_2 + \theta_{12} u_1 u_2 \overline{u}_1\overline{u}_2, \quad (u_1,u_2) \in [0,1]^2,
\end{equation}
with $\theta_{12} \in \mathcal{T}_2 = \left[-1,1\right]$. The association measures such as Kendall's tau and Spearman's rho for the bivariate FGM copula are respectively given by $\tau =2\theta_{12}/9$ and $\rho =\theta_{12}/3$. We use the notation $C \in \mathcal{C}_d^{FGM}$ to denote that $C$ is a FGM copula. 

The following Theorem combines the stochastic representation based on multivariate symmetric Bernoulli random vectors of FGM copulas proposed in \cite{blier-wong2022stochastic} along with the stochastic representation based on order statistics that is discussed in \cite{baker2008orderstatisticsbased} and Section 8.3.2 of \cite{bladt2017matrixexponential}.
\begin{theorem}\label{thm:stochastic-representation}
	The copula in \eqref{eq:fgm-copula} has the equivalent representation 
	\begin{equation}\label{eq:order-statistic-formulation}
		C(\boldsymbol{u}) = E_{\boldsymbol{I}}\left[\prod_{k = 1}^d F_{U_{[I_k + 1]}}(u_k)\right], \quad \boldsymbol{u} \in [0, 1]^d,
	\end{equation}
	for $\boldsymbol{u} \in [0, 1]^d$, where $f_{\boldsymbol{I}}$ is the pmf of $\boldsymbol{I}$, a symmetric multivariate Bernoulli random vector, that is, $F_{\boldsymbol{I}} \in \mathcal{B}_d$.
	The dependence parameters are proportional to central mixed moments as follows:
	\begin{equation}\label{eq:theta_from_I}
		\theta_{j_{1}\ldots j_{k}} = (-2)^k E_{\boldsymbol{I}}\left\{\prod_{n = 1}^k\left(I_{j_n} - \frac{1}{2}\right)\right\},
	\end{equation}
	for $k \in \{2, \dots, d\}$ and $1 \leq j_{1} < \dots < j_{k} \leq d$.
\end{theorem}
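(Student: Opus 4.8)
The plan is to reduce the claimed identity to an elementary polynomial expansion, using the explicit form of the uniform order-statistic cdfs together with the symmetry of $\boldsymbol{I}$. First I would specialize \eqref{eq:cdf-pdf-min} and \eqref{eq:cdf-pdf-max} to the uniform marginal $F_U(u) = u$, giving $F_{U_{[1]}}(u) = u(1 + \overline{u})$ and $F_{U_{[2]}}(u) = u(1 - \overline{u})$. The crucial observation is that these two cases collapse into a single formula indexed by a Bernoulli variable: since $I_k \in \{0,1\}$ we have $(-1)^{I_k} = 1 - 2I_k = -2(I_k - 1/2)$, so that
\[
  F_{U_{[I_k + 1]}}(u_k) = u_k\bigl(1 + (-1)^{I_k}\overline{u}_k\bigr) = u_k\bigl(1 - 2(I_k - \tfrac{1}{2})\overline{u}_k\bigr),
\]
for every realization of $I_k$ (indeed $I_k=0$ gives $F_{U_{[1]}}$ and $I_k=1$ gives $F_{U_{[2]}}$).

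Next I would substitute this into the right-hand side of \eqref{eq:order-statistic-formulation}, factor out $\prod_{k=1}^d u_k$, and expand the remaining product over subsets. Writing $a_k = -2(I_k - 1/2)\overline{u}_k$, the distributive law gives $\prod_{k=1}^d (1 + a_k) = \sum_{S \subseteq \{1,\dots,d\}} \prod_{k \in S} a_k$, and collecting terms by the cardinality $k = |S|$ yields
\[
  \prod_{k=1}^d F_{U_{[I_k+1]}}(u_k) = \Bigl(\prod_{k=1}^d u_k\Bigr)\Bigl[1 + \sum_{k=1}^d \sum_{1 \le j_1 < \cdots < j_k \le d} (-2)^k \Bigl(\prod_{n=1}^k (I_{j_n} - \tfrac{1}{2})\Bigr)\overline{u}_{j_1}\cdots \overline{u}_{j_k}\Bigr].
\]
Taking $E_{\boldsymbol{I}}$ moves the expectation inside the finite sum and acts only on the factors $\prod_{n=1}^k (I_{j_n} - 1/2)$, since the $u_k$ and $\overline{u}_k$ are deterministic.

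The final step is to match this against \eqref{eq:fgm-copula}. Here the symmetry of $\boldsymbol{I}$ is decisive: because $F_{\boldsymbol{I}} \in \mathcal{B}_d$ has symmetric Bernoulli marginals, $E_{\boldsymbol{I}}[I_{j} - 1/2] = 0$, so every $k=1$ term vanishes and the sum begins at $k = 2$, exactly as in \eqref{eq:fgm-copula}; as a byproduct this also confirms that the right-hand side has uniform marginals and is a genuine copula. Comparing the coefficient of $\overline{u}_{j_1}\cdots \overline{u}_{j_k}$ in the two expressions then forces $\theta_{j_1\dots j_k} = (-2)^k E_{\boldsymbol{I}}\bigl[\prod_{n=1}^k (I_{j_n} - 1/2)\bigr]$, which is \eqref{eq:theta_from_I}. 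I expect this expansion to be painless; the genuinely substantive point — and the one I would treat most carefully — is that the correspondence is a true equivalence, i.e.\ that for every admissible parameter vector in $\mathcal{T}_d$ there actually exists a symmetric multivariate Bernoulli vector $\boldsymbol{I}$ realizing the moments in \eqref{eq:theta_from_I}. That reverse direction is where the constraints \eqref{eq:constraints-general} must be shown to coincide with the existence of a valid distribution $F_{\boldsymbol{I}} \in \mathcal{B}_d$, and it is here that I would invoke the one-to-one correspondence established in \cite{blier-wong2022stochastic} rather than re-derive it.
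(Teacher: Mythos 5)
Your proof is correct, and it is worth noting that it is substantially more self-contained than the paper's: the paper's proof of this theorem is a two-line citation of Corollary 3.3 (and Remark 3.5) of \cite{blier-wong2022stochastic} combined with the order-statistic cdfs \eqref{eq:cdf-pdf-min}--\eqref{eq:cdf-pdf-max}, whereas you reconstruct the forward direction of that corollary explicitly. Your key steps are all sound: $F_{U_{[1]}}(u)=u(1+\overline{u})$ and $F_{U_{[2]}}(u)=u(1-\overline{u})$ do collapse to $F_{U_{[I_k+1]}}(u_k)=u_k\bigl(1-2(I_k-\tfrac{1}{2})\overline{u}_k\bigr)$, the multilinear expansion over subsets is exact, and the $k=1$ terms vanish precisely because $E[I_j]=\tfrac{1}{2}$ for symmetric Bernoulli marginals. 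You also correctly locate the genuinely substantive point: the identity you derive shows only that every $F_{\boldsymbol{I}}\in\mathcal{B}_d$ \emph{produces} an FGM copula with parameters \eqref{eq:theta_from_I}; the claim that the representation covers the whole class $\mathcal{T}_d$ (surjectivity of the map $F_{\boldsymbol{I}}\mapsto(\theta)$) is the content of the one-to-one correspondence in \cite{blier-wong2022stochastic}, and deferring it there is exactly what the paper does as well. Two small points of care. First, in the forward direction no coefficient matching is strictly needed --- \eqref{eq:theta_from_I} can be read off as the definition of the induced parameters; if you do phrase it as matching coefficients against \eqref{eq:fgm-copula}, you should note that the monomials $\overline{u}_{j_1}\cdots\overline{u}_{j_k}$ are linearly independent as functions on $[0,1]^d$, so the identification is indeed forced. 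Second, uniform marginals alone do not make the right-hand side of \eqref{eq:order-statistic-formulation} a copula: one also needs $d$-increasingness, but this is immediate from the mixture structure, since conditional on $\boldsymbol{I}=\boldsymbol{i}$ the product $\prod_{k=1}^d F_{U_{[i_k+1]}}(u_k)$ is the joint cdf of independent rvs and a finite mixture of cdfs is a cdf --- a remark worth adding so that the ``genuine copula'' byproduct claim is airtight.
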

\begin{proof}
	The relation in \eqref{eq:order-statistic-formulation} follows from Corollary 3.3 of \cite{blier-wong2022stochastic} and the definitions of cdfs for order statistics in \eqref{eq:cdf-pdf-min} and \eqref{eq:cdf-pdf-max}. See also Remark 3.5 of \cite{blier-wong2022stochastic}.
\end{proof}

\begin{theorem}\label{thm:stochastic-sum}
	Let $\boldsymbol{U}_{[j]} = (U_{1, [j]}, \dots, U_{d, [j]})$ be a $d$-variate vector of iid rvs satisfying $U_{k, [j]} \sim Beta(j, 3 - j)$ for $k \in \{1, \dots, d\}$ and $j \in \{1, 2\}$. Define the random vector 
	\begin{equation}\label{eq:U-based-on-I}
		\boldsymbol{U} = (\boldsymbol{1}-\boldsymbol{I}) \boldsymbol{U}_{[1]} + \boldsymbol{I} \boldsymbol{U}_{[2]},
	\end{equation}
	where $\boldsymbol{1}$ is a $d$-variate vector of ones. Then, we have that $F_{\boldsymbol{U}}(\boldsymbol{u}) \in \mathcal{C}_d^{FGM}$. More generally, fix some marginal cdfs $F_{X_1}, \dots, F_{X_d}$ and let $\boldsymbol{X}_{[j]} = (X_{1, [j]}, \dots, X_{d, [j]})$ be vectors of independent rvs with respective marginal cdf $F_{X_{k, [j]}}$, as defined in \eqref{eq:cdf-pdf-min} and \eqref{eq:cdf-pdf-max}, for $k \in \{1, \dots, d\}$ and $j\in\{1, 2\}$. Define the random vector
	\begin{equation}\label{eq:X-based-on-I}
		\boldsymbol{X} = (\boldsymbol{1} - \boldsymbol{I}) \boldsymbol{X}_{[1]} + \boldsymbol{I}\boldsymbol{X}_{[2]}. 
	\end{equation}
	Then, we have $F_{\boldsymbol{X}}(\boldsymbol{x}) = C(F_{X_1}(x_1), \dots, F_{X_d}(x_d)),$ where $C \in \mathcal{C}_{d}^{FGM}$.
\end{theorem}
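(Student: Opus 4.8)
The plan is to reduce both statements to the mixture representation already established in Theorem \ref{thm:stochastic-representation} by conditioning on $\boldsymbol{I}$. I would begin with the uniform case and write the construction \eqref{eq:U-based-on-I} componentwise. Since $I_k \in \{0,1\}$, the coordinate $U_k = (1-I_k)U_{k,[1]} + I_k U_{k,[2]}$ equals $U_{k,[1]}$ on $\{I_k = 0\}$ and $U_{k,[2]}$ on $\{I_k = 1\}$, that is, $U_k = U_{k,[I_k+1]}$. The first step is therefore to condition on $\boldsymbol{I} = \boldsymbol{i}$ and evaluate the joint cdf.

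The key observation is that, conditionally on $\boldsymbol{I} = \boldsymbol{i}$, the selected coordinates $U_1, \dots, U_d$ are mutually independent: each $U_k$ is one of the two variables $U_{k,[1]}, U_{k,[2]}$, and all of these are independent across $k$ by the iid assumption on each of the vectors $\boldsymbol{U}_{[1]}$ and $\boldsymbol{U}_{[2]}$ together with their mutual independence from $\boldsymbol{I}$. Hence
\begin{equation*}
\Pr(U_1 \leq u_1, \dots, U_d \leq u_d \mid \boldsymbol{I} = \boldsymbol{i}) = \prod_{k=1}^d F_{U_{[i_k+1]}}(u_k),
\end{equation*}
and averaging over $\boldsymbol{I}$ yields
\begin{equation*}
F_{\boldsymbol{U}}(\boldsymbol{u}) = E_{\boldsymbol{I}}\left[\prod_{k=1}^d F_{U_{[I_k+1]}}(u_k)\right].
\end{equation*}
The right-hand side is precisely \eqref{eq:order-statistic-formulation}, so $F_{\boldsymbol{U}} \in \mathcal{C}_d^{FGM}$ by Theorem \ref{thm:stochastic-representation}.

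For the general marginals I would run the identical conditioning argument on \eqref{eq:X-based-on-I} to obtain $F_{\boldsymbol{X}}(\boldsymbol{x}) = E_{\boldsymbol{I}}\left[\prod_{k=1}^d F_{X_{k,[I_k+1]}}(x_k)\right]$, and then connect the order-statistic cdfs to the marginals. Using \eqref{eq:cdf-pdf-min}--\eqref{eq:cdf-pdf-max} I have $F_{X_{k,[1]}}(x_k) = 1 - \overline{F}_{X_k}(x_k)^2$ and $F_{X_{k,[2]}}(x_k) = F_{X_k}(x_k)^2$; comparing with the uniform expressions $F_{U_{[1]}}(u) = 1 - (1-u)^2$ and $F_{U_{[2]}}(u) = u^2$ shows that $F_{X_{k,[j]}}(x_k) = F_{U_{[j]}}(F_{X_k}(x_k))$ for $j \in \{1,2\}$. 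Substituting $u_k = F_{X_k}(x_k)$ then gives $F_{\boldsymbol{X}}(\boldsymbol{x}) = E_{\boldsymbol{I}}\left[\prod_{k=1}^d F_{U_{[I_k+1]}}(F_{X_k}(x_k))\right] = C(F_{X_1}(x_1), \dots, F_{X_d}(x_d))$ with $C \in \mathcal{C}_d^{FGM}$. A complementary check is that the construction returns the prescribed marginals: since $F_{\boldsymbol{I}} \in \mathcal{B}_d$ gives $\Pr(I_k = 0) = \Pr(I_k = 1) = 1/2$, the $k$th marginal of $\boldsymbol{X}$ is $\tfrac{1}{2}F_{X_{k,[1]}} + \tfrac{1}{2}F_{X_{k,[2]}} = F_{X_k}$, which is the cdf analogue of \eqref{eq:order-statistic-moment-relationship} and follows from a one-line computation.

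I do not anticipate a serious obstacle, as the substantive content is already carried by Theorem \ref{thm:stochastic-representation}; the only point requiring care is the justification of conditional independence of the selected coordinates given $\boldsymbol{I}$, which rests on making explicit the assumed mutual independence of $\boldsymbol{I}$, $\boldsymbol{U}_{[1]}$ (resp.\ $\boldsymbol{X}_{[1]}$) and $\boldsymbol{U}_{[2]}$ (resp.\ $\boldsymbol{X}_{[2]}$), together with the within-vector independence of their coordinates.
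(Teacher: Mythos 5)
Your proposal is correct and follows essentially the same route as the paper's own proof: condition on $\boldsymbol{I}$, use the conditional independence of the selected coordinates to factor the joint cdf into $\prod_{k=1}^d F_{X_{k,[I_k+1]}}(x_k)$, rewrite this as $\prod_{k=1}^d F_{U_{[I_k+1]}}(F_{X_k}(x_k))$, and invoke the stochastic representation \eqref{eq:order-statistic-formulation} of Theorem \ref{thm:stochastic-representation}. The only differences are cosmetic — you treat $\boldsymbol{U}$ first and deduce $\boldsymbol{X}$, whereas the paper does the reverse, and you spell out the independence assumptions and the marginal check more explicitly than the paper does.
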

\begin{proof}
	We prove the statement about the random vector $\boldsymbol{X}$. We have
	\begin{align*}
		\Pr(\boldsymbol{X} \leq \boldsymbol{x})	&= E_{\boldsymbol{I}}\left[\Pr\left((1-I_1)X_{1, [1]} + I_1X_{1, [2]} \leq x_1, \dots, (1-I_d)X_{d, [1]} + I_dX_{d, [2]} \leq x_d\right)\right],
	\end{align*}
	which becomes
	\begin{align*}
	    \Pr(\boldsymbol{X} \leq \boldsymbol{x}) 
		&= E_{\boldsymbol{I}}\left[\prod_{k = 1}^{d}\Pr\left((1-I_k)X_{k, [1]} + I_kX_{k, [2]} \leq x_k\right)\right]\\
		&= E_{\boldsymbol{I}}\left[\prod_{k = 1}^{d}\Pr\left(X_{k, [I_k + 1]} \leq x_k\right)\right]= E_{\boldsymbol{I}}\left[\prod_{k = 1}^{d}F_{X_{k, [I_k + 1]}}(x_k)\right]\\
		&= E_{\boldsymbol{I}}\left[\prod_{k = 1}^{d}F_{U_{[I_k + 1]}}(F_{X_k}(x_k))\right].
	\end{align*}
	The proof for the random vector $\boldsymbol{U}$ holds by replacing $F_{X_k}(x) = x$ for $k = 1, \dots, d$.
\end{proof}

The representation of Theorems \ref{thm:stochastic-representation} and \ref{thm:stochastic-sum} are more convenient to develop the results of the current paper and will help us understand the dependence structure behind $C \in \mathcal{C}_d^{FGM}$. Theorem \ref{thm:stochastic-representation} states that, conditional on $\boldsymbol{I}$, the copula $C$ is the product of independent cdfs of $U_{[1]}$ or $U_{[2]}$. Theorem \ref{thm:stochastic-sum} constructs random vectors $\boldsymbol{U}$ and $\boldsymbol{X}$ which have the same joint cdfs as the ones we are investigating in this paper. The authors of \cite{blier-wong2022stochastic} call \eqref{eq:fgm-copula} the natural representation of the FGM copula since the parameters in \eqref{eq:theta_from_I} are central mixed moments. They also refer to \eqref{eq:order-statistic-formulation} as the stochastic representation of the FGM copula since it relies on the stochastic nature based on $\boldsymbol{I}$ and order statistics. 

\section{Risk aggregation with FGM copulas: the general method}\label{sec:general}

For this section, we consider a vector of rvs $\boldsymbol{X}  = (X_1, \dots, X_d)$ with cdf
\begin{equation}\label{eq:cdf-x}
	F_{\boldsymbol{X}}(\boldsymbol{x}) = C(F_{X_1}(x_1), \dots, F_{X_d}(x_d)), \quad \boldsymbol{x} \in \mathbb{R}_+^d,
\end{equation}
where $C \in \mathcal{C}_d^{FGM}$. Using representation of the FGM copula in \eqref{eq:order-stat-prob-int-transform} and \eqref{eq:order-statistic-formulation}, the joint cdf of $\boldsymbol{X}$ becomes
	\begin{equation}\label{eq:cdf-x-v2}
		F_{\boldsymbol{X}}(\boldsymbol{x}) = E_{\boldsymbol{I}}\left[\prod_{k = 1}^d F_{U_{[I_k + 1]}}(F_{X_k}(x_k))\right] = E_{\boldsymbol{I}}\left[\prod_{k = 1}^d F_{X_{k, [I_k+1]}}(x_k)\right], \quad \boldsymbol{x} \in \mathbb{R}^d.
	\end{equation}
	It follows that the joint LST of $\boldsymbol{X}$ is 
	\begin{equation}\label{eq:lst-x}
		\laplace{\boldsymbol{X}}{\boldsymbol{t}} = \int_{\mathbb{R}_+^d} e^{-\boldsymbol{tx}} \diff F_{\boldsymbol{X}}(\boldsymbol{x})
		= E_{\boldsymbol{I}}\left[\prod_{k = 1}^d \int_{\mathbb{R}_+} e^{-t_k x_k}\diff F_{X_{k, [I_k+1]}}(x_k)\right]
		= E_{\boldsymbol{I}}\left[\prod_{k = 1}^d \laplace{X_{k, [I_k + 1]}}{t_k}\right],
	\end{equation}
for $t \geq 0$. Let $S$ be the rv representing the aggregate risk of the vector $\boldsymbol{X}$, that is, $S = X_1 + \dots + X_d$. From Theorem \ref{thm:stochastic-sum}, we also have
\begin{align}
	S \overset{\mathcal{D}}{=}& \sum_{k = 1}^{d} \left\{(1-I_k) X_{k, [1]} + I_kX_{k, [2]}\right\}.\label{eq:S-representation-sum}\\
\end{align}
We are now in a position to state the following Theorem.
\begin{theorem}\label{prop:lst-s}
	The LST of $S$ is
	\begin{equation}\label{eq:lst-s}
		\laplace{S}{t} = E_{\boldsymbol{I}}\left[\prod_{k = 1}^d \laplace{X_{k, [I_k + 1]}}{t}\right], \quad t\geq 0.
	\end{equation}
\end{theorem}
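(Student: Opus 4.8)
The plan is to obtain the LST of $S$ directly from the stochastic representation of $S$ in \eqref{eq:S-representation-sum}, exploiting the conditional independence structure that $\boldsymbol{I}$ induces. First I would condition on $\boldsymbol{I}$ and write $\laplace{S}{t} = E\left[e^{-tS}\right] = E_{\boldsymbol{I}}\left[E\left[e^{-tS} \mid \boldsymbol{I}\right]\right]$ by the tower property. The key observation is that, given $\boldsymbol{I} = \boldsymbol{i}$, the summands $(1-i_k)X_{k,[1]} + i_k X_{k,[2]}$ are exactly $X_{k,[i_k+1]}$, and these are independent across $k \in \{1, \dots, d\}$ because the underlying vectors $\boldsymbol{X}_{[1]}$ and $\boldsymbol{X}_{[2]}$ consist of independent rvs by the hypothesis of Theorem \ref{thm:stochastic-sum}.

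Given that conditional independence, the conditional LST of the sum factors into a product of the individual conditional LSTs:
\begin{equation*}
	E\left[e^{-tS} \mid \boldsymbol{I} = \boldsymbol{i}\right] = \prod_{k=1}^d E\left[e^{-t X_{k,[i_k+1]}}\right] = \prod_{k=1}^d \laplace{X_{k,[i_k+1]}}{t}.
\end{equation*}
Taking the expectation over $\boldsymbol{I}$ then yields \eqref{eq:lst-s} immediately. I expect the only genuine step requiring justification to be the conditional factorization, which I would support by noting that conditioning on $\boldsymbol{I}$ selects, for each coordinate, one of two independent order-statistic rvs, so the joint conditional law is a product measure.

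An alternative and perhaps cleaner route is to read the result off the joint LST already computed in \eqref{eq:lst-x} by evaluating at $\boldsymbol{t} = (t, \dots, t)$; indeed $\laplace{S}{t} = E\left[e^{-t(X_1 + \dots + X_d)}\right] = \laplace{\boldsymbol{X}}{t\boldsymbol{1}}$, and substituting $t_k = t$ for all $k$ into \eqref{eq:lst-x} gives exactly \eqref{eq:lst-s}. This makes the proof a one-line specialization of an expression derived earlier in the section. The main obstacle, such as it is, is purely bookkeeping: ensuring that the independence used to factor the joint integral in \eqref{eq:lst-x} is the same coordinatewise conditional independence invoked above, so that setting all transform arguments equal is legitimate. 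Given that \eqref{eq:lst-x} is stated as an established identity in the excerpt, I would present the proof via this specialization and mention the conditioning argument as the underlying justification.
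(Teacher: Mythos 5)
Your proposal is correct, and the route you ultimately present---reading \eqref{eq:lst-s} off the joint LST \eqref{eq:lst-x} evaluated at $\boldsymbol{t} = t\boldsymbol{1}$---is exactly the paper's proof, which simply invokes the definition of $\mathcal{L}_S(t)$ together with \eqref{eq:lst-x}. Your conditioning argument via \eqref{eq:S-representation-sum} is a sound unwinding of how \eqref{eq:lst-x} was itself derived, not a genuinely different method.
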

\begin{proof}
	The result follows directly from the definition of $\mathcal{L}_S(t)$ and \eqref{eq:lst-x}.
\end{proof}

Theorem \ref{prop:lst-s} is the main tool to identify the distribution of the aggregate risk $S$. In some cases, we obtain exact results to compute the cdf $S$. In others, we are only able to obtain the moments of $S$.

\begin{theorem}
	For $m \in \mathbb{N}_1$, and assuming that $E[X_k^m]$ exists for $k = 1, \dots, d$, we have
	\begin{equation}\label{eq:m-moment-s-stochastic-max}
		E\left[S^m\right] = \sum_{j_1 + \cdots + j_d = m} \frac{m!}{j_1!\cdots j_d!} \left\{\prod_{k = 1}^{d}E[X_k^{j_k}]\right\} E_{\boldsymbol{I}}\left[\prod_{k = 1}^{d}\left\{1 + (-1)^{I_k}\left(1 - \frac{\mu_{X_{k, [2]}}^{(j_k)}}{E[X_k^{j_k}]}\right)\right\}\right].
	\end{equation}
\end{theorem}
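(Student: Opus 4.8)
The plan is to start from the stochastic-sum representation in \eqref{eq:S-representation-sum}, condition on $\boldsymbol{I}$, and expand the $m$th power by the multinomial theorem. Conditional on $\boldsymbol{I}$, the summands $(1-I_k)X_{k,[1]} + I_k X_{k,[2]} = X_{k,[I_k+1]}$ are \emph{independent} across $k$, so the conditional mixed moment factors across the index $k$. Concretely, I would write
\begin{align*}
	E\left[S^m \mid \boldsymbol{I}\right]
	= \sum_{j_1 + \cdots + j_d = m} \frac{m!}{j_1! \cdots j_d!} \prod_{k=1}^d E\left[X_{k,[I_k+1]}^{j_k} \mid I_k\right]
	= \sum_{j_1 + \cdots + j_d = m} \frac{m!}{j_1! \cdots j_d!} \prod_{k=1}^d \mu_{X_{k,[I_k+1]}}^{(j_k)}.
\end{align*}
Then I would take the expectation over $\boldsymbol{I}$ using the tower property, $E[S^m] = E_{\boldsymbol{I}}\left[E[S^m \mid \boldsymbol{I}]\right]$, and push the expectation inside the finite multinomial sum (legitimate since there are finitely many terms and each moment is assumed finite).

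The crux is then to rewrite the conditional order-statistic moment $\mu_{X_{k,[I_k+1]}}^{(j_k)}$ in the form appearing inside the product in \eqref{eq:m-moment-s-stochastic-max}, namely $E[X_k^{j_k}]\bigl\{1 + (-1)^{I_k}(1 - \mu_{X_{k,[2]}}^{(j_k)}/E[X_k^{j_k}])\bigr\}$. For this I would invoke the moment relation \eqref{eq:moment-m} with $j = I_k + 1$, which already gives $\mu_{X_{k,[j]}}^{(m)} = (-1)^j \mu_{X_{k,[2]}}^{(m)} + 2(2-j)E[X_k^m]$. The small bookkeeping is to check the two cases $I_k \in \{0,1\}$: when $I_k = 0$ (so $j = 1$) the formula must collapse to $2E[X_k^{j_k}] - \mu_{X_{k,[2]}}^{(j_k)}$, and when $I_k = 1$ (so $j = 2$) it must collapse to $\mu_{X_{k,[2]}}^{(j_k)}$. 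Verifying that the target expression $E[X_k^{j_k}]\{1 + (-1)^{I_k}(1 - \mu_{X_{k,[2]}}^{(j_k)}/E[X_k^{j_k}])\}$ reproduces exactly these two values is a one-line check in each case, and factoring out $E[X_k^{j_k}]$ from the product is what produces the $\prod_k E[X_k^{j_k}]$ prefactor in \eqref{eq:m-moment-s-stochastic-max}.

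I expect the only genuine obstacle—more a matter of care than of difficulty—to be managing the sign $(-1)^{I_k}$ versus $(-1)^{I_k+1}$ and ensuring the substitution $j = I_k + 1$ is applied consistently across both the $(-1)^j$ and the $(2-j)$ terms in \eqref{eq:moment-m}; a sign slip here would swap the roles of $X_{k,[1]}$ and $X_{k,[2]}$. Once the per-coordinate identity is confirmed, assembling the final expression is immediate: substitute the factored form into the conditional moment, pull the deterministic $\frac{m!}{j_1!\cdots j_d!}\prod_k E[X_k^{j_k}]$ out of the expectation over $\boldsymbol{I}$, and leave the remaining $\boldsymbol{I}$-dependent product inside $E_{\boldsymbol{I}}[\,\cdot\,]$, which yields \eqref{eq:m-moment-s-stochastic-max} exactly.
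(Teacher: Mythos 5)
Your proposal is correct and follows essentially the same route as the paper's own proof: multinomial expansion, conditioning on $\boldsymbol{I}$ with the conditional moments factoring by independence, then rewriting $\mu_{X_{k,[I_k+1]}}^{(j_k)}$ via \eqref{eq:moment-m} with $j = I_k + 1$ and factoring out $\prod_k E[X_k^{j_k}]$. Your explicit two-case check at $I_k \in \{0,1\}$ (yielding $2E[X_k^{j_k}] - \mu_{X_{k,[2]}}^{(j_k)}$ and $\mu_{X_{k,[2]}}^{(j_k)}$ respectively) is exactly the sign bookkeeping the paper compresses into the phrase ``factoring out the expected values of the original marginals,'' and it verifies correctly.
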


\begin{proof}
	Applying the multinomial theorem, we have
	$$E\left[S^m\right] = E\left[\left(\sum_{k = 1}^{d} X_k\right)^m \right] =E\left[\sum_{j_1 + \dots + j_d = m}  \binom{m!}{j_1!\dots j_d!} X_1^{j_1} \dots X_d^{j_d}\right].$$
	We condition on $\boldsymbol{I}$ to obtain
	\begin{align*}
	    E[S^m] &= E_{\boldsymbol{I}}\left[ E\left[\left.\sum_{j_1 + \cdots + j_d = m} \frac{m!}{j_1!\cdots j_d!}  X_{1, [I_1 + 1]}^{j_1}\cdots X_{d, [I_d + 1]}^{j_d}\right\vert \boldsymbol{I}\right]\right]\\
		&= \sum_{j_1 + \cdots + j_d = m} \frac{m!}{j_1!\cdots j_d!}E_{\boldsymbol{I}}\left[\mu_{X_{1, [I_1 + 1]}}^{(j_1)}\cdots \mu_{X_{d, [I_d + 1]}}^{(j_d)}\right].
	\end{align*}
	Inserting the last equality into \eqref{eq:moment-m}, the $m$th moment of $S$ becomes
	$$E\left[S^m\right] = \sum_{j_1 + \cdots + j_d = m} \frac{m!}{j_1!\cdots j_d!}E_{\boldsymbol{I}}\left[\prod_{k = 1}^{d}\left\{(-1)^{1 + I_k}\mu_{X_{k, [2]}}^{(j_k)} + 2(1- I_k)E[X_k^{j_k}]\right\}\right].$$
	Factoring out the expected values of the original marginals yields the desired result.
\end{proof}
One can also use the relation in \eqref{eq:order-statistic-moment-relationship} to obtain 
\begin{equation}
	E\left[S^m\right]= \sum_{j_1 + \cdots + j_d = m} \frac{m!}{j_1!\cdots j_d!} \left\{\prod_{k = 1}^{d}E[X_k^{j_k}]\right\} E_{\boldsymbol{I}}\left[\prod_{k = 1}^{d}\left\{1 + (-1)^{I_k}\left(\frac{\mu_{X_{k, [1]}}^{(j_k)}}{E[X_k^{j_k}]} - 1\right)\right\}\right].\label{eq:m-moment-s-stochastic-min}
\end{equation}

One obtains exact results for the $m$th moment of $S$ if one has exact results for the $j$th moment of each marginal and either the minimum or maximum of two marginals, with $j \in \{1, \dots, m\}$. Alternatively, we can write the moments in terms of the natural representation of the FGM copula. 
\begin{corollary}\label{cor:moments}
	For $m \in \mathbb{N}_1$, we have
	\begin{equation}\label{eq:moments-natural-formulation}
		E\left[S^m\right] = \sum_{j_1 + \cdots + j_d = m} \frac{m!}{j_1!\cdots j_d!} \left\{\prod_{k = 1}^{d}E[X_k^{j_k}]\right\} A_l(j_1, \dots, j_k),
	\end{equation}
	for either $l \in \{1, 2\}$, with
	\begin{align*}
		A_1(j_1, \dots, j_k) = 1 + \sum_{k = 2}^{d} \sum_{1 \leq n_1 < \cdots < n_k \leq d}\theta_{n_1\dots n_k} \left(\frac{\mu_{X_{n_1, [1]}}^{(j_{n_1})}}{E[X_{n_1}^{j_{n_1}}]} - 1\right) \cdots \left(\frac{\mu_{X_{n_k, [1]}}^{(j_{n_k})}}{E[X_{n_k}^{j_{n_k}}]} - 1\right);\\
		A_2(j_2, \dots, j_k) = 1 + \sum_{k = 2}^{d} \sum_{1 \leq n_1 < \dots < n_k \leq d}\theta_{n_1\dots n_k} \left(1 - \frac{\mu_{X_{n_1, [2]}}^{(j_{n_1})}}{E[X_{n_1}^{j_{n_1}}]}\right) \cdots \left(1 - \frac{\mu_{X_{n_k, [2]}}^{(j_{n_k})}}{E[X_{n_k}^{j_{n_k}}]}\right).
	\end{align*}
\end{corollary}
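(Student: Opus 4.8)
The plan is to leave the combinatorial prefactor $\frac{m!}{j_1!\cdots j_d!}\prod_k E[X_k^{j_k}]$ from \eqref{eq:m-moment-s-stochastic-max} untouched and to rewrite only the inner expectation $E_{\boldsymbol{I}}[\cdots]$ in terms of the copula parameters $\theta$. Fixing a composition $j_1 + \cdots + j_d = m$ and abbreviating the deterministic quantities $a_k = 1 - \mu_{X_{k,[2]}}^{(j_k)}/E[X_k^{j_k}]$, the goal reduces to establishing
$$E_{\boldsymbol{I}}\left[\prod_{k=1}^d\left(1 + (-1)^{I_k} a_k\right)\right] = A_2(j_1, \dots, j_d).$$

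First I would expand the product over $\{1,\dots,d\}$ into a sum over subsets,
$$\prod_{k=1}^d\left(1 + (-1)^{I_k} a_k\right) = \sum_{S \subseteq \{1,\dots,d\}}\left(\prod_{k \in S} a_k\right)\prod_{k \in S}(-1)^{I_k},$$
and, since the $a_k$ are constants, pull $E_{\boldsymbol{I}}$ inside by linearity so that each subset $S$ contributes the scalar factor $E_{\boldsymbol{I}}\big[\prod_{k \in S}(-1)^{I_k}\big]$. The crux is to evaluate this factor through the identity $(-1)^{I_k} = -2(I_k - \tfrac12)$, valid because $I_k \in \{0,1\}$; it converts the factor into $(-2)^{|S|}E_{\boldsymbol{I}}\big[\prod_{k \in S}(I_k - \tfrac12)\big]$, which is precisely $\theta_S$ by \eqref{eq:theta_from_I} whenever $|S| \ge 2$. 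The empty subset contributes $1$, while every singleton contributes $0$ because $E_{\boldsymbol{I}}[I_k] = \tfrac12$. Relabelling the surviving subsets of size at least two as ordered tuples $1 \le n_1 < \cdots < n_k \le d$ turns the subset sum into the nested sum defining $A_2$.

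I do not anticipate a genuine obstacle here; the computation is essentially the moment-generating expansion underlying the stochastic representation of \cite{blier-wong2022stochastic}. The only points needing care are bookkeeping in nature: confirming that the singleton terms vanish so that the sum legitimately begins at pairs (matching the range $k \ge 2$ in \eqref{eq:fgm-copula} and \eqref{eq:theta_from_I}), and the purely cosmetic passage between the sum over subsets and the sum over ordered tuples. Finally, the $l = 1$ identity follows by the same steps applied to \eqref{eq:m-moment-s-stochastic-min}, with $a_k$ replaced by $b_k = \mu_{X_{k,[1]}}^{(j_k)}/E[X_k^{j_k}] - 1$, since the identity $(-1)^{I_k} = -2(I_k - \tfrac12)$ and formula \eqref{eq:theta_from_I} are unchanged.
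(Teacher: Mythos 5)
Your proof is correct, and it is precisely the argument the paper leaves implicit: the corollary is stated without proof as a direct consequence of \eqref{eq:m-moment-s-stochastic-max} and \eqref{eq:m-moment-s-stochastic-min}, and your subset expansion of $E_{\boldsymbol{I}}\bigl[\prod_{k=1}^d(1+(-1)^{I_k}a_k)\bigr]$ via the identity $(-1)^{I_k}=-2(I_k-\tfrac{1}{2})$ together with \eqref{eq:theta_from_I} (with singletons vanishing since $E[I_k]=\tfrac{1}{2}$) is exactly the intended route. No gaps.
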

The special case where $C$ is the independence copula yields $A_1(j_1, \dots, j_k) = A_2(j_1, \dots, j_k) = 1$ for $k = 2, \dots, d$.

\section{Aggregation of some continuous rvs}\label{sec:continuous}

This section investigates special cases of 
distributions for positive continuous rvs which are closed under convolution when the dependence structure is a FGM copula, or which admit closed-form solutions for the $m$th moments. The guiding principle is that when rvs have closed-form representations for (i) the cdfs of their order statistics, or (ii) the $m$th moments of their order statistics, then one may obtain equivalent results for the aggregate rvs.

\subsection{Implications with exponential marginals}

We have seen in Example \ref{ex:exp-order-stats} that when $X$ is exponentially distributed, then $X_{[1]}$ and $X_{[2]}$ have convenient stochastic forms. It isn't surprising, given the link between the FGM copula and order statistics, that FGM copulas are based on exponential FGM distributions, first studied in their namesake papers, \cite{eyraud1936principes}, \cite{farlie1960performance}, \cite{gumbel1960bivariate} and \cite{morgenstern1956einfache}. When $\boldsymbol{X}$ has cdf $F_{\boldsymbol{X}}(\boldsymbol{x}) = C(F_{X_1}(x_1), \dots, F_{X_d}(x_d))$ with $C \in \mathcal{C}_d^{FGM}$ and $F_{X_j}(x) = 1 - \exp\{-\beta_j x\}$, for $j\in\{1,\dots,d\}$, then $S$ will also have a convenient stochastic form. 

\subsubsection{Case with exponential marginals with identical parameters}

We now study the special case where $F_{X_k}(x) = F(x) = 1 - \exp\{-\beta x\}$, for $x \geq 0$ and $k = 1, \dots, d$. For notational purposes, we introduce the rv $N_d$ which corresponds to the sum of the components from $\boldsymbol{I}$, that is, $N_d = \sum_{k = 1}^{d}I_k$. It follows from Theorem \ref{prop:lst-s} that
\begin{align}
	\laplace{S}{t} &= E\left[\left(\frac{2\beta}{2\beta + t} \frac{\beta}{\beta+t}\right)^{N_d}\left(\frac{2\beta}{2\beta + t}\right)^{d - N_d}\right]\nonumber\\
	&= \left(\frac{2\beta}{2\beta + t}\right)^d E\left[\left(\frac{\beta}{\beta + t}\right)^{N_d}\right] = \left(\frac{2\beta}{2\beta + t}\right)^d \mathcal{P}_{N_d}\left(\frac{\beta}{\beta + t}\right),\label{eq:lst-id-exponential}
\end{align}
for $t \geq 0$, where $\mathcal{P}_J(t)$ is the probability generating function (pgf) of a discrete rv $J$. From the form of $\mathcal{L}_S$ in \eqref{eq:lst-id-exponential}, one recognizes that $S$ is the sum of two independent rvs $Y_1$ and $Y_2$, where $Y_1 \sim Erlang(d, 2\beta)$ and $Y_2$ follows a compound distribution with cdf
$$F_{Y_2}(x) = E\left[H(x, N_d, \beta)\right] = \sum_{k = 0}^d \Pr(N_d = k) H(x, k, \beta), \quad x \geq 0,$$
where $H(x, \alpha, \beta)$ is the cdf of an Erlang distribution with shape $\alpha$ and rate $\beta$, and with $H(x, 0, \beta) = 1$. We conclude that $Y_2$ follows a finite mixture of Erlang distributions with probabilities given by the pmf of $N_d$ and rate parameter $\beta$. 

Further, one can show that $S$ follows a mixed Erlang distribution. Following \cite{willmot2007class}, we write the LST of $Y_1$ and $Y_2$ under the same rate parameter using the identity
\begin{equation}\label{eq:lemma-beta}
	\frac{\beta_1}{\beta_1 + t} = \frac{\beta_2}{\beta_2 + t} \left\{ \frac{\beta_1 / \beta_2}{1 - (1 - \beta_1 / \beta_2) \frac{\beta_2}{\beta_2 + t}}\right\},
\end{equation}
for $0 < \beta_1 \leq \beta_2 < \infty$ and $t \geq 0$. Specifically, combining \eqref{eq:lst-id-exponential} and \eqref{eq:lemma-beta}, we obtain
\begin{equation}\label{eq:laplace-sum-exp}
	\laplace{S}{t} = \left(\frac{2\beta}{2\beta + t}\right)^d \mathcal{P}_{N_d}\left(\frac{2\beta}{2\beta + t} \left\{ \frac{0.5}{1 - 0.5 \frac{2\beta}{2\beta + t}}\right\}\right) = \mathcal{P}_M\left(\frac{2\beta}{2\beta + t}\right),
\end{equation}
where 
$$\mathcal{P}_{M}(t) = t^d \mathcal{P}_{N_d}\left(\frac{0.5t}{1 - 0.5t}\right), \quad t \geq 0.$$
From the expression in \eqref{eq:laplace-sum-exp}, we deduce that $S$ follows a mixed Erlang distribution with rate $2\beta$ and parameters $q_k = \Pr(M = k),$ for $k \in \mathbb{N}_1$.

\subsubsection{Case with exponential marginals with different parameters}\label{ss:non-id-exponential}

We now consider the case where $F_{X_k}(x) = 1 - \exp\{-\beta_k x\}, x \geq 0, k = 1, \dots, d$, and where $\beta_1 \neq \dots \neq \beta_d$. Applying Theorem \ref{prop:lst-s} to the order statistic representation of exponentially distributed rvs provided in Example \ref{ex:exp-order-stats}, the LST of $S$ is
\begin{equation}\label{eq:non-id-exp}
	\laplace{S}{t} = 
	E\left[\prod_{k = 1}^{d} \left(\frac{2\beta_k}{2\beta_k + t}\right)\left(\frac{\beta_k}{\beta_k + t}\right)^{I_k}\right] = \left\{\prod_{k = 1}^{d} \left(\frac{2\beta_k}{2\beta_k + t}\right)\right\} \times E\left[\prod_{k = 1}^{d} \left(\frac{\beta_k}{\beta_k + t}\right)^{I_k}\right], \quad t \geq 0.
\end{equation}
One may decompose the LST in \eqref{eq:non-id-exp} as the product of two LSTs, hence $S$ is the sum of two independent rvs that we denote $Y_1$ and $Y_2$. One observes that $Y_1$ follows a generalized Erlang distribution with cdf
$$F_{Y_1}(x) = \sum_{k = 1}^{d} \left( \prod_{j = 1, j \neq k}^{d} \frac{\beta_j}{\beta_j - \beta_k}\right)\left(1 - e^{-2\beta_j x}\right), \quad x \geq 0.$$
Since the conditional rv $Y_2 \vert \boldsymbol{I}$ also follows a generalized Erlang distributions, $Y_2$ is a finite mixture of generalized Erlang rvs with cdf
\begin{align*}
	F_{Y_2}(x) &= E_{\boldsymbol{I}}\left[F_{Y_2 \vert \boldsymbol{I}}(x)\right] = \sum_{\boldsymbol{i} \in \{0, 1\}^d} f_{\boldsymbol{I}}(\boldsymbol{i}) F_{Y_2 \vert \boldsymbol{I} = \boldsymbol{i}}(x)\\
		&= \sum_{\boldsymbol{i} \in \{0, 1\}^d} f_{\boldsymbol{I}}(\boldsymbol{i})\sum_{\{k \in \{1, \dots, d\}\vert i_k = 1\}} \left( \prod_{\{j \in \{1, \dots, d\} \vert  i_j = 1, j \neq k\}} \frac{\beta_j}{\beta_j - \beta_k}\right)\left(1 - e^{-\beta_j x}\right), \quad x \geq 0.
\end{align*}
Once again, we can show that $S$ follows a mixed Erlang distribution and use \eqref{eq:lemma-beta} to set all cdfs under the same rate parameter. 

\subsection{Mixed Erlang distributions}\label{ss:mx-erlang-sum}

Let $X_k$, for $k = 1, \dots, d$, follow mixed Erlang distributions, parametrized by vectors of probabilities $\{q_{k, j}, j \in \mathbb{N}_1\}$, a common rate parameter $\beta$, and cdfs
$$F_{X_k}(x) = \sum_{j = 1}^\infty q_{k, j} H(x; j, \beta), \quad x \geq 0.$$
Also, let $L_k$ be the discrete rv with pmf $\Pr(L_k = j) = q_{k, j}$, for $j \in \mathbb{N}_1$ and $k\in \{1,\dots,d\}$. Then, the LST of $X_k$ is given by $\mathcal{L}_{X_k}(t) = \mathcal{P}_{L_k}\left(\beta/(\beta + t)\right),$ for $k \in \{1, \dots, d\}$ and $t\geq 0$.

In \cite{landriault2015note}, the authors show that when a rv $X$ is mixed Erlang distributed, then $X_{[1]}$ and $X_{[2]}$ are also mixed Erlang distributed. We briefly recall their result and provide expressions for the new parameters of the distributions for the rvs $X_{[1]}$ and $X_{[2]}$. Let $Q_{k, j} = \sum_{m = 1}^j q_{k, m}$, for $j \in \mathbb{N}_1$ and $Q_{k, 0} = 0$. One has 
$$F_{X_{k, [i+1]}}(x) = \sum_{j = 1}^\infty q_{k, j, \{i+1\}} H(x; j, 2\beta), \quad k = 1, \dots, d,\quad i \in \{0, 1\}, \quad x > 0,$$
with 
\begin{equation}\label{eq:pmf-order-stat-mxerl}
	q_{k, j, \{i+1\}} = \begin{cases}
		\frac{1}{2^{j-1}} \sum_{m = 0}^{j-1} \binom{j-1}{m} q_{k, m + 1} \left(1 - Q_{k, j-1-m}\right), & \text{for } i = 0\\
		\frac{1}{2^{j-1}} \sum_{m = 0}^{j-1} \binom{j-1}{m} q_{j, m + 1} Q_{k, j-1-m}, & \text{for } i = 1
	\end{cases},
\end{equation}
for $j \in \mathbb{N}_1$, where \eqref{eq:pmf-order-stat-mxerl} is a special case of equation (2.7) from \cite{landriault2015note}. 

Note that $q_{k, j, \{i+1\}}$ does necessarily correspond to $\Pr(L_{k, [i+1]} = j)$, for $k\in\{1,\dots,d\}$ and $j\in\mathbb{N}_1$, it is for this reason that we use the braces notation instead of the brackets notation. We denote $L_{k, \{i+1\}}$ the rv with probability masses $\left\{q_{k, j, \{i+1\}}, j \in \mathbb{N}_1\right\}$, for $k \in \{1, \dots, d\}$ and $i \in \{0, 1\}$. 

From Theorem \ref{prop:lst-s}, the Laplace-Stieltjes transform of $S$ is 
\begin{equation*}
	\laplace{S}{t} = E_{\boldsymbol{I}}\left[\prod\limits_{k=1}^{d} \sum_{j = 1}^\infty q_{k, j, \{I_k+1\}}\left(\frac{2\beta}{2\beta + t}\right)^j\right] = E_{\boldsymbol{I}}\left[\prod\limits_{k=1}^{d} \mathcal{P}_{L_{k, \{I_k+1\}}}\left(\frac{2\beta}{2\beta + t}\right)\right], \quad t\geq 0.
\end{equation*}
Noticing that $S \vert \boldsymbol{I}$ is the sum of $d$ independent compound distributed rvs, we rearrange the LST as 
\begin{equation}\label{eq:lst-mxerl-int}
    \laplace{S}{t} = \sum_{\boldsymbol{i}\in \{0,1\}^d} f_{\boldsymbol{I}}(\boldsymbol{i}) \mathcal{P}_{M_{\boldsymbol{i}}}\left(\frac{2\beta}{2\beta + t}\right) = \sum_{\boldsymbol{i}\in \{0,1\}^d} f_{\boldsymbol{I}}(\boldsymbol{i}) \sum_{j = 1}^{\infty} \Pr(M_{\boldsymbol{i}} = j)\left(\frac{2\beta}{2\beta + t}\right)^j,
\end{equation}
where $M_{\boldsymbol{i}} = L_{1, \{i_1+1\}} + \dots + L_{d, \{i_d+1\}}$ for $\boldsymbol{i} \in \{0, 1\}^d$ and $t \geq 0$. We suggest using the fast Fourier transform of \cite{cooley1965algorithm} to compute the probability masses of $M_{\boldsymbol{i}}$. From the expression in \eqref{eq:lst-mxerl-int}, we conclude that $S$ also follows a mixed Erlang distribution. Indeed, we deduce from \eqref{eq:lst-mxerl-int} that
\begin{equation}\label{eq:cdf-mx-erl}
	F_S(x) = \sum_{\boldsymbol{i}\in \{0,1\}^d} f_{\boldsymbol{I}}(\boldsymbol{i})\sum_{j = 1}^\infty \Pr(M_{\boldsymbol{i}} = j) H(x; j, 2\beta) = \sum_{j = 1}^\infty q_{S, j} H(x; j, 2\beta), \quad x \geq 0,
\end{equation}
with 
\begin{equation}\label{eq:weights-mxerl}
	q_{S, j} = \sum_{\boldsymbol{i}\in \{0,1\}^d} f_{\boldsymbol{I}}(\boldsymbol{i}) \Pr(M_{\boldsymbol{i}} = j), \quad j \in \mathbb{N}_1.
\end{equation}
From \eqref{eq:weights-mxerl}, one may compute risk measures of the aggregate rv $S$. For instance, from \eqref{eq:cdf-mx-erl}, we have that the TVaR of $S$ at level $\kappa \in (0, 1)$ is 
\begin{equation}\label{key}
	\text{TVaR}_{\kappa}(S) = \sum_{j = 1}^\infty q_{S, j} \frac{j}{2\beta}\overline{H}(\text{VaR}_{\kappa}(S); j + 1, 2\beta),
\end{equation}
where $\overline{H}(x; \alpha, \beta) = 1 - H(x; \alpha, \beta)$, and where $\text{VaR}_{\kappa}(S)$ is obtained by numerical inversion of \eqref{eq:cdf-mx-erl}. 

The results from this subsection were previously shown, though stated differently, in Proposition 4.2 of \cite{cossette2013multivariate} from a purely algebraic argument, under the natural representation of the FGM copula. The significant contribution from this subsection is that formulas are much simpler and more intuitive. In addition, the stochastic representation of FGM copulas break down the problem of computing \eqref{eq:weights-mxerl} as a convolution or a mixture of the discrete probability masses in \eqref{eq:pmf-order-stat-mxerl}. From a programming standpoint, this is an important advantage since one can validate proper computation of pmfs at all intermediate steps. Finally, one can obtain similar results for mixed Erlang distributions that do not share the same rate parameter using the same strategy as the one used in Section \ref{ss:non-id-exponential}, see also Section 2 of \cite{willmot2007class}.

So far, we showed that when the random vector $\boldsymbol{X}$ has mixed Erlang marginals, and when the copula defining $F_{\boldsymbol{X}}$ is FGM, then the aggregate rv is also mixed Erlang distributed. The conditions for this result are that each marginal distribution are closed under order statistics, finite mixture and convolution (for each marginal and across the random vector). As shown in \cite{bladt2017matrixexponential}, phase-type and matrix-exponential distributions also have the three closure properties. If follows that the aggregate rv of phase-type and matrix-exponential distributions under FGM dependence will also respectively follow phase-type and matrix-exponential distributions, though we defer investigating the implications of these statements to future research.

\subsection{Some closed-form moments of continuous rvs}

We now study random vectors with Pareto and Weibull marginals, and find closed-form expressions for the $m$th moments of the aggregate rv. Since one knows the distribution of $X_{[1]}$ when $X$ is Pareto or Weibull distributed, then one may compute the moments of $X$, of $X_{[1]}$ and of $X_{[2]}$, the latter requiring the identity in \eqref{eq:order-statistic-moment-relationship}. Note that since Pareto and Weibull distributions do not have convenient closure properties for mixtures and convolution operations, we may not obtain exact expressions for the cdf of $S$ as we have for mixed Erlang distributions. 

First assume that the rv $X$ follows a Pareto type IV distribution, denoted Pareto(IV), with survival function
$$\overline{F}_{X}(x) =  \left[1 + \left(\frac{x - \mu}{\sigma}\right)^{1/\gamma}\right]^{-\alpha}, \quad x > \mu,$$
with $\mu \in \mathbb{R}$ and $\sigma, \gamma, \alpha > 0$. One obtains the Lomax distribution, popular in actuarial science by setting $\mu = 0$ and $\gamma = 1$. The case $\mu = 0$ simplifies to a Burr type XII distribution, while $\alpha = 1$, simplifies to the log-logistic distribution. See \cite{arnold2015pareto} for more details. When $X$ follows a Pareto(IV), its m$th$ moment exists for $-\gamma ^{-1} < m < \alpha / \gamma$ and is given by
\begin{equation}\label{eq:moment-m-pareto}
	E[X^m] = \sigma^m \frac{\Gamma(\alpha - \gamma m)\Gamma(1 + \gamma m)}{\Gamma(\alpha)}.
\end{equation}
The survival function for $X_{[1]}$ when $X$ follows a Pareto(IV) distribution is
$$\overline{F}_{X_{[1]}}(x) =  \left[1 + \left(\frac{x - \mu}{\sigma}\right)^{1/\gamma}\right]^{-2\alpha}, \quad x > \mu,$$
which is the survival function of a Pareto(IV) distribution but with parameter $2\alpha$. Therefore, for $-\gamma^{-1} < m < 2\alpha / \gamma$, we have
\begin{equation}\label{eq:moment-m-pareto-min}
	\mu_{X_{[1]}}^{(m)} = \sigma^m \frac{\Gamma(2\alpha - \gamma m)\Gamma(1 + \gamma m)}{\Gamma(2\alpha)}.
\end{equation}
Inserting \eqref{eq:moment-m-pareto} and \eqref{eq:moment-m-pareto-min} within \eqref{eq:m-moment-s-stochastic-min}, we have, for $-\gamma^{-1} < m < \alpha / \gamma$, that
\begin{align*}
	E\left[S^m\right] &= \sum_{j_1 + \cdots + j_d = m} \frac{m!}{j_1!\cdots j_d!} \left\{\prod_{k = 1}^{d}\sigma^{j_k} \frac{\Gamma(\alpha - \gamma j_k)\Gamma(1 + \gamma j_k)}{\Gamma(\alpha)}\right\} \\
	&\qquad \times E_{\boldsymbol{I}}\left[\prod_{k = 1}^{d}\left\{1 + (-1)^{I_k}\left(\frac{\Gamma(2\alpha - \gamma j_{k})}{\Gamma(\alpha - \gamma j_k)}\frac{2^{1-2\alpha} \sqrt{\pi}}{\Gamma(\alpha + 1/2)} - 1\right)\right\}\right].
\end{align*}
Alternatively from \eqref{eq:moments-natural-formulation}, we have 
\begin{align*}
	E\left[S^m\right] &= \sum_{j_1 + \cdots + j_d = m} \frac{m!}{j_1!\cdots j_d!} \left\{\prod_{k = 1}^{d}\sigma^m \frac{\Gamma(\alpha - \gamma j_k)\Gamma(1 + \gamma j_k)}{\Gamma(\alpha)}\right\}  \\
	& \qquad \times\left(1 + \sum_{k = 2}^{d} \sum_{1 \leq n_1 < \cdots < n_k \leq d}\theta_{n_1\cdots n_k} \left(\frac{\Gamma(2\alpha - \gamma j_{n_1})}{\Gamma(\alpha - \gamma j_{n_1})}\frac{2^{1-2\alpha} \sqrt{\pi}}{\Gamma(\alpha + 1/2)} - 1\right)\right. \\
	& \qquad\qquad \left.\times \dots \times \left(\frac{\Gamma(2\alpha - \gamma j_{n_k})}{\Gamma(\alpha - \gamma j_{n_k})}\frac{2^{1-2\alpha} \sqrt{\pi}}{\Gamma(\alpha + 1/2)} - 1\right) \vphantom{\sum_{k = 2}^{d}}\right).
\end{align*}

Next, assume that $X$ follows a Weibull distribution with pdf
$$f_{X}(x) = \beta \tau (\beta x)^{\tau - 1} e^{-(\beta x)^{\tau}}, \quad x \geq 0,$$
and survival function
$$\overline{F}_{X}(x) = e^{-(\beta x)^{\tau}}, \quad x \geq 0,$$
where $\beta, \tau > 0$, with moments given by $E[X^m] = \beta^{-m}\Gamma\left(1 + m/\tau\right)$. One computes
\begin{align*}
	\mu_{X_{[1]}}^{(m)} &= 2 \int_{0}^{\infty} x^{m}(1-e^{-(\beta x)^{\tau}})\beta \tau (\beta x)^{\tau - 1} e^{-(\beta x)^{\tau}} \diff x = 2 E[X^m] - 2 \int_{0}^{\infty} \beta^{\tau} \tau x^{\tau + m - 1} e^{-2\beta^{\tau} x^{\tau}} \diff x.
\end{align*}
Letting $u = x^{\tau}$, we find 
\begin{equation}\label{eq:weibull-mean-min}
	\mu_{X_{[1]}}^{(k)} = 2 E[X^k] - \int_{0}^{\infty} 2\beta^{\tau}  u^{m/\tau} e^{-2\beta^{\tau} u} \diff u
	= \frac{1}{\beta^m}\Gamma\left(1 + \frac{m}{\tau}\right)\left(2 - 2^{-m/\tau}\right).
\end{equation}
Inserting \eqref{eq:weibull-mean-min} within \eqref{eq:m-moment-s-stochastic-min} or \eqref{eq:moments-natural-formulation}, we have respectively
\begin{align}
	E\left[S^m\right] &= \sum_{j_1 + \cdots + j_d = m} \frac{m!}{j_1!\cdots j_d!} \left\{\prod_{k = 1}^{d}\frac{1}{\beta^{j_k}}\Gamma\left(1 + \frac{j_k}{\tau}\right)\right\} E_{\boldsymbol{I}}\left[\prod_{k = 1}^{d}\left\{1 + (-1)^{I_k}\left(1 - 2^{-j_k/\tau}\right)\right\}\right]\label{eq:moment-weibull-stochastic}\\
	&= \sum_{j_1 + \cdots + j_d = m} \frac{m!}{j_1!\cdots j_d!} \left\{\prod_{k = 1}^{d}\frac{1}{\beta^{j_k}}\Gamma\left(1 + \frac{j_k}{\tau}\right)\right\}\nonumber \\
	& \qquad \times \left(1 + \sum_{k = 2}^{d} \sum_{1 \leq n_1 < \cdots < n_k \leq d}\theta_{n_1\cdots n_k} \left(1 - 2^{-j_{n_1}/\tau}\right) \cdots \left(1 - 2^{-j_{n_k}/\tau}\right)\right).\label{eq:moment-weibull-natural}
\end{align}
Note that in Section 5.6.3 of \cite{kotz2001correlation}, the authors develop an expression similar to \eqref{eq:moment-weibull-stochastic} for the product moments of $\boldsymbol{X}$ under the natural representation of the FGM copula for Weibull marginals. The advantage of the approach we take in the current paper is that one obtains the result by directly applying Corollary \eqref{cor:moments}, and this corollary holds for any combination of marginal distributions. 

\subsection{Closing remarks for continuous rvs}

In this section, we showed that if $\boldsymbol{X}$ has joint cdf $F_{\boldsymbol{X}}(\boldsymbol{x}) = C(F_{X_1}(x_1), \dots, F_{X_d}(x_d))$, for $\boldsymbol{x} \in \mathbb{R}^d_+$, where $C \in \mathcal{C}^{FGM}_d$ and $F_{X_k}, k =1, \dots, d$, are the cdfs of mixed Erlang rvs, then $S$ follows a mixed Erlang distribution. Also, when $F_{X_k}$ is the cdf of Pareto(IV) or Weibull distributions, then we have closed-form expressions for the $m$th moments of $S$. We close this section by discussing other distributions which could admit convenient expressions with FGM copulas, and why we have not considered them. 

The relationship in \eqref{eq:m-moment-s-stochastic-min}, which uses the moments of the minimum order statistic, is usually more useful: for survival functions defined as compositions of a first function with a power functon, the survival function of the minimum will also be defined as compositions of a first function with another power function. That is, squaring a power function will yield another power function. This is the reason why we obtain closed-form expressions for Pareto(IV) and Weibull marginals. Another example which satisfies this condition is when $X$ follows a Gompertz-Makeham distribution, then $X_{[1]}$ also follows a Gompertz-Makeham distribution. However, contrarily to Pareto and Weibull distributions, computing the moments from Gompertz-Makeham distributions require numerical integration. 

On the other hand, when the cdf is defined as the composition of a first function with a power function, then squaring the cdf will yield a cdf in the same family as the original cdf, so the moment associated with $X_{[2]}$ has a preferable shape for computations. A simple example is the standard power function distribution with cdf $F_X(x) = x^{\alpha}, x \in [0, 1], \alpha > 0$. Another example is the Gumbel distribution with cdf
$$F_{X}(x) = \exp\left\{-\exp\left(-\frac{x - \mu}{\sigma}\right)\right\}, \quad x \in \mathbb{R},$$
with $\mu \in \mathbb{R}, \sigma \in \mathbb{R}^+$. Then, the cdf of the maximum is also the cdf of a Gumbel distribution:
$$F_{X_{[2]}}(x) = \exp\left\{-2\exp\left(-\frac{x - \mu}{\sigma}\right)\right\} = \exp\left\{-\exp\left(-\frac{x - \mu}{\sigma/\ln 2}\right)\right\}, \quad x \in \mathbb{R}.$$
However, the moment generating function of a Gumbel distribution being $\Gamma(1 - \sigma t)\exp(\mu t)$, moments are tedious to compute. 

In \cite{nadarajah2008explicit}, the author presents expressions for the moments of order statistics for normal and log-normal distributions. These expressions are a function of a finite sum of Lauricella functions of type A. The moments of order statistics for log-normal distributions still require numerical integration. Since the expressions for these moments are tedious, we omit them in the current paper. 

Finally, we note that \eqref{eq:m-moment-s-stochastic-min}, \eqref{eq:m-moment-s-stochastic-max} and \eqref{eq:moments-natural-formulation} do not require to assume the same marginal distributions; one can compute the exact value of a given $m$th moment for the sum of a combination of, for example, mixed Erlang, phase-type, matrix-exponential, Pareto(IV) and Weibull distributions with different parameters, provided the $j$th moments, $j = 1, \dots, m$ of the maximum of each marginal distribution is finite. 

\section{Studying the impact of dependence with stochastic orders}\label{sec:stochastic-order}

In this section, we will leverage the stochastic representation of FGM copulas to study the impact of dependence on the aggregate rv $S$. We briefly recall the notions required for this section. Let $\boldsymbol{X}$ and $\boldsymbol{X}'$ be two $d$-variate random vectors whose cdfs belong to the same Fréchet class $\mathcal{FC}(F_{X_1}, \dots, F_{X_d})$. Our aim is to compare the rvs $S$ and $S'$, which respectively correspond to the sum of rvs from the random vectors $\boldsymbol{X}$ and $\boldsymbol{X}'$. An important stochastic order in actuarial science, which measures the variability of a rv, is the convex order.
\begin{definition}[Convex order]
	Let $Y$ and $Y'$ be two rvs with finite expectations. We say that $Y$ is smaller than $Y'$ under the convex order if $E[\phi(Y)] \leq E[\phi(Y')]$ for every convex function $\phi$, when the expectations exist. We denote two rvs ordered according to the convex order as $Y \preceq_{cx} Y'$.
\end{definition}
Some relevant implications of the relation $S \preceq_{cx}S'$ are that $E[S] = E[S']$, $Var(S) \leq Var(S')$ (assuming that they exist), and $\text{TVaR}_{\kappa}(S) \leq \text{TVaR}_{\kappa}(S')$, for all $\kappa \in (0, 1)$, see \cite{muller2002comparison,denuit2006actuarial, shaked2007stochastic} for a more comprehensive list. 

In our quest to order the aggregate rvs according to the convex order, we will first need to compare vectors of rvs, $(V_{1},\dots,V_{d})$ and $(V_{1}^{\prime}, \dots, V_{d}^{^{\prime}})$, using dependence stochastic orders, where, for each $j \in \{1,\dots,d\}$, $V_{j}$ and $V_{j}'$ have the same marginal distribution. In Sections 3.8 and 3.9 of \cite{muller2002comparison}, the authors present the supermodular order.
\begin{definition}[Supermodular order] \label{defSupermodularOrder}
	We say $\boldsymbol{V}$ is smaller than $\boldsymbol{V}'$ under the supermodular order, denoted $\boldsymbol{V}\preceq_{sm}\boldsymbol{V}'$, if $E\left[\phi (\boldsymbol{V})\right] \leq E\left[\phi (\boldsymbol{V}')\right]$ for all supermodular functions $\phi $, given that the expectations exist. A function $\phi :\mathbb{R}^{d}\rightarrow \mathbb{R}$ is said to be supermodular if
	\begin{eqnarray*}
		&&\phi (x_{1},\ldots,x_{i}+\varepsilon ,\ldots,x_{j}+\delta ,\ldots,x_{d})-\phi
		(x_{1},\ldots,x_{i}+\varepsilon ,\ldots,x_{j},\ldots,x_{d}) \\
		&\geq &\phi (x_{1},\ldots,x_{i},\ldots,x_{j}+\delta ,\ldots,x_{d})-\phi
		(x_{1},\ldots,x_{i},\ldots,x_{j},\ldots,x_{d})
	\end{eqnarray*}
	holds for all $(x_1, \dots, x_d)\in \mathbb{R}^{d}$, $1\leq
	i < j\leq d$\ and all $\varepsilon$, $\delta >0$.
\end{definition}
The supermodular order satisfies the nine desired properties for dependence orders as mentioned in Section 3.8 of \cite{muller2002comparison}. See also \cite{shaked2007stochastic} and \cite{denuit2006actuarial} for more details on the supermodular order. We recall the following Theorem from \cite{blier-wong2022stochastic} which presents the general result for supermodular orders within the family of FGM copulas. 
\begin{theorem}\label{thm:supermodular-i-u}
	Let $\boldsymbol{I}$ and $\boldsymbol{I}'$ be random vectors with $F_{\boldsymbol{I}}, F_{\boldsymbol{I}'} \in \mathcal{B}_d$. Let $\boldsymbol{U}$ and $\boldsymbol{U}'$ be random vectors constructed using \eqref{eq:U-based-on-I} and
	$\boldsymbol{X}$ and $\boldsymbol{X}'$ be random vectors constructed using \eqref{eq:X-based-on-I}. If $\boldsymbol{I} \preceq_{sm} \boldsymbol{I}'$, then $\boldsymbol{U} \preceq_{sm} \boldsymbol{U}'$ and $\boldsymbol{X} \preceq_{sm} \boldsymbol{X}'$.
\end{theorem}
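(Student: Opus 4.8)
The plan is to reduce the statement about $\boldsymbol{X}$ to the one about $\boldsymbol{U}$, and then prove $\boldsymbol{U} \preceq_{sm} \boldsymbol{U}'$ by showing that the construction in \eqref{eq:U-based-on-I} sends every supermodular test function of $\boldsymbol{U}$ to a supermodular function of the driving vector $\boldsymbol{I}$. For the reduction, I would use \eqref{eq:order-stat-prob-int-transform} to write $X_{k,[j]} \overset{\mathcal{D}}{=} F_{X_k}^{-1}(U_{k,[j]})$; since $I_k \in \{0,1\}$, the binary selector commutes with the increasing map, so componentwise $X_k = F_{X_k}^{-1}(U_k)$. As the generalized inverses $F_{X_k}^{-1}$ are non-decreasing and the supermodular order is preserved under componentwise non-decreasing transformations (Section 3.9 of \cite{muller2002comparison}), $\boldsymbol{U} \preceq_{sm} \boldsymbol{U}'$ immediately yields $\boldsymbol{X} \preceq_{sm} \boldsymbol{X}'$. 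It therefore suffices to treat $\boldsymbol{U}$.

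For $\boldsymbol{U} \preceq_{sm} \boldsymbol{U}'$, I would fix an arbitrary supermodular $\phi$ and condition on the selector. Because the components of $\boldsymbol{U}_{[1]}$ and $\boldsymbol{U}_{[2]}$ are independent across coordinates, conditionally on $\boldsymbol{I} = \boldsymbol{i}$ the coordinates of $\boldsymbol{U}$ are independent, with $U_k$ distributed as $U_{[i_k+1]}$. Writing $V_k^{(0)} \sim Beta(1,2)$ and $V_k^{(1)} \sim Beta(2,1)$ for independent copies of the two order statistics, this gives $E[\phi(\boldsymbol{U})] = E_{\boldsymbol{I}}[\psi(\boldsymbol{I})]$, where $\psi(\boldsymbol{i}) = E[\phi(V_1^{(i_1)}, \dots, V_d^{(i_d)})]$. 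The crux is to show that $\psi : \{0,1\}^d \to \mathbb{R}$ is itself supermodular; granting this, $\boldsymbol{I} \preceq_{sm} \boldsymbol{I}'$ gives $E_{\boldsymbol{I}}[\psi(\boldsymbol{I})] \leq E_{\boldsymbol{I}'}[\psi(\boldsymbol{I}')]$, that is $E[\phi(\boldsymbol{U})] \leq E[\phi(\boldsymbol{U}')]$, and letting $\phi$ range over all supermodular functions closes the argument.

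I expect the main obstacle to be establishing supermodularity of $\psi$ on the binary lattice. On $\{0,1\}^d$ this reduces to the pairwise inequality $\psi(\ldots 1_i \ldots 1_j \ldots) + \psi(\ldots 0_i \ldots 0_j \ldots) \geq \psi(\ldots 1_i \ldots 0_j \ldots) + \psi(\ldots 0_i \ldots 1_j \ldots)$ for each pair $i<j$, with the remaining coordinates frozen. Freezing those coordinates and integrating out their (independent) order statistics produces a two-variable function $\tilde\phi(x,y)$ that inherits supermodularity from $\phi$, since averaging a supermodular function over independent arguments preserves the pairwise supermodular inequality. It then remains to verify $E[\tilde\phi(V_i^{(1)}, V_j^{(1)})] + E[\tilde\phi(V_i^{(0)}, V_j^{(0)})] \geq E[\tilde\phi(V_i^{(1)}, V_j^{(0)})] + E[\tilde\phi(V_i^{(0)}, V_j^{(1)})]$. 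I would realise the order statistics through the monotone coupling $V_k^{(0)} = \min(U_k, U_k')$ and $V_k^{(1)} = \max(U_k, U_k')$ with $U_k, U_k'$ iid uniform, chosen independently across $k$, so that $V_k^{(0)} \leq V_k^{(1)}$ almost surely while coordinates $i$ and $j$ remain independent. Applying supermodularity of $\tilde\phi$ pointwise to the ordered pair $(V_i^{(0)}, V_j^{(0)}) \leq (V_i^{(1)}, V_j^{(1)})$ and taking expectations yields exactly the desired inequality. The one point to check carefully is that this pointwise-then-integrate step matches the four joint laws appearing in the supermodularity condition for $\psi$, which holds precisely because the coupling keeps coordinates $i$ and $j$ independent, so that each $V_i^{(a)}$ and $V_j^{(b)}$ retain their required product law.
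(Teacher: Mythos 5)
Your proof is correct, but note first that the paper contains no internal proof of this theorem to compare against: it is recalled verbatim from \cite{blier-wong2022stochastic}, so your argument supplies a self-contained demonstration of a result the paper only cites. Your route is the natural one and, as far as the structure goes, the expected one: reduce $\boldsymbol{X}$ to $\boldsymbol{U}$ via $X_k \overset{\mathcal{D}}{=} F_{X_k}^{-1}(U_k)$ (the binary selector does commute with the increasing quantile map, and $\preceq_{sm}$ is indeed preserved under componentwise non-decreasing transformations, Section 3.9 of \cite{muller2002comparison}), then condition on the selector and establish supermodularity of $\psi(\boldsymbol{i}) = E\left[\phi\left(V_1^{(i_1)}, \dots, V_d^{(i_d)}\right)\right]$ on $\{0,1\}^d$ through its pairwise second differences. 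The min/max coupling $V_k^{(0)} = \min(U_k, U_k')$, $V_k^{(1)} = \max(U_k, U_k')$, independent across $k$, is exactly right: the pointwise supermodular inequality applied to the a.s.\ ordered pairs $V_i^{(0)} \leq V_i^{(1)}$, $V_j^{(0)} \leq V_j^{(1)}$ integrates to the four required product laws precisely because the coupling keeps coordinates $i$ and $j$ independent, a point you correctly identified as the crux. In effect you have reproved, in this special case, the standard closure result for the supermodular order under conditionally independent mixing with stochastically increasing kernels (here $U_{[1]} \preceq_{st} U_{[2]}$), which is the machinery the cited reference leans on.

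One loose end deserves a line: Definition \ref{defSupermodularOrder} tests $\boldsymbol{I} \preceq_{sm} \boldsymbol{I}'$ against supermodular functions on all of $\mathbb{R}^d$, whereas your $\psi$ is defined only on the lattice $\{0,1\}^d$, so you cannot literally feed $\psi$ into the hypothesis without an extension step. Either invoke the known equivalence that for $\{0,1\}^d$-valued vectors it suffices to test lattice-supermodular functions, or extend explicitly: the multilinear extension $\tilde\psi(\boldsymbol{x}) = \sum_{\boldsymbol{i}\in\{0,1\}^d} \psi(\boldsymbol{i}) \prod_{k=1}^{d} x_k^{i_k}(1-x_k)^{1-i_k}$ has mixed second partials equal to convex combinations of the discrete second differences you proved nonnegative, hence is supermodular on $[0,1]^d$, and composing with the componentwise clamp onto $[0,1]$ (a non-decreasing map) yields a supermodular function on $\mathbb{R}^d$ agreeing with $\psi$ on $\{0,1\}^d$. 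This is a two-line patch, not a flaw in the approach; with it, your proof is complete.
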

Establishing the supermodular order within a class of copulas has important consequences for risk aggregation, as the following proposition shows.
\begin{proposition}\label{prop:order-cx-s}
	If $\boldsymbol{X} \preceq_{sm} \boldsymbol{X}'$ holds, then $\sum_{j=1}^d X_j \preceq_{cx} \sum_{j=1}^d X_j'$, where $\preceq_{cx}$ is the convex order. 
\end{proposition}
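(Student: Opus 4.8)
The plan is to show that the summation map is a supermodular function, so that the supermodular order on the random vectors transfers directly to the convex order on their sums. The key observation is that the function $\psi(x_1, \dots, x_d) = \phi\left(\sum_{j=1}^d x_j\right)$ is supermodular whenever $\phi : \mathbb{R} \to \mathbb{R}$ is convex. To see this, I would verify the defining inequality of Definition \ref{defSupermodularOrder}: for the composition $\psi$, the left-hand and right-hand differences both reduce to increments of $\phi$ evaluated at $\sum_{k} x_k + \varepsilon + \delta$ versus $\sum_k x_k + \varepsilon$ (respectively $\sum_k x_k + \delta$ versus $\sum_k x_k$), and the supermodularity inequality becomes exactly the statement that $\phi(s + \varepsilon + \delta) - \phi(s + \varepsilon) \geq \phi(s + \delta) - \phi(s)$, which is the standard increasing-increments characterization of convexity.

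With that lemma in hand, the argument is short. First I would fix an arbitrary convex function $\phi$ for which the relevant expectations exist, and set $\psi(\boldsymbol{x}) = \phi(x_1 + \dots + x_d)$. By the observation above, $\psi$ is supermodular. Then the hypothesis $\boldsymbol{X} \preceq_{sm} \boldsymbol{X}'$ gives, by the very definition of the supermodular order (Definition \ref{defSupermodularOrder}), that $E[\psi(\boldsymbol{X})] \leq E[\psi(\boldsymbol{X}')]$. Rewriting this in terms of the sums yields $E\left[\phi\left(\sum_{j=1}^d X_j\right)\right] \leq E\left[\phi\left(\sum_{j=1}^d X_j'\right)\right]$. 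Since $\phi$ was an arbitrary convex function, this is precisely the statement $\sum_{j=1}^d X_j \preceq_{cx} \sum_{j=1}^d X_j'$ by the definition of the convex order.

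The step that requires the most care, and which I expect to be the only genuine obstacle, is the verification that convexity of $\phi$ implies supermodularity of the composition $\psi$. The delicate point is matching the two-variable increment structure in the definition of supermodularity to the single-variable increasing-increments property of convex functions: one must check that perturbing only the $i$th and $j$th coordinates (while all others are held fixed) produces increments of $\phi$ that depend on the fixed coordinates only through the common baseline sum $s = \sum_{k \neq i, j} x_k + x_i + x_j$, so that the inequality collapses to the clean one-dimensional condition $\phi(s+\varepsilon+\delta) - \phi(s+\varepsilon) \geq \phi(s+\delta) - \phi(s)$. I would also note that the equality of marginals guaranteed by the supermodular order ensures $E[S] = E[S']$, and that the finiteness-of-expectations caveats in both definitions are compatible, so no integrability issue arises beyond those already assumed.
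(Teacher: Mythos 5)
Your proof is correct, but it takes a genuinely different route from the paper: the paper does not prove Proposition \ref{prop:order-cx-s} at all, instead deferring to Theorem 8.3.3 of \cite{muller2002comparison} and Proposition 6.3.9 of \cite{denuit2006actuarial}. What you have written out is essentially the standard argument that underlies those cited results. Your key lemma is verified correctly: for $\psi(\boldsymbol{x}) = \phi(x_1 + \cdots + x_d)$ with $\phi$ convex, all four terms in the defining inequality of Definition \ref{defSupermodularOrder} depend on the perturbed points only through the baseline sum $s = \sum_{k=1}^{d} x_k$, so the inequality collapses to $\phi(s+\varepsilon+\delta) - \phi(s+\varepsilon) \geq \phi(s+\delta) - \phi(s)$, which holds for all $\varepsilon, \delta > 0$ by the increasing-increments characterization of convexity; applying the definition of $\preceq_{sm}$ to $\psi$ then finishes the argument. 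Two minor observations. First, your integrability remark is consistent with the paper's conventions: Definition \ref{defSupermodularOrder} already restricts to supermodular functions whose expectations exist, matching the proviso in the definition of $\preceq_{cx}$, so no truncation or approximation step is needed here (in treatments where the supermodular order is defined via bounded supermodular functions only, extending to unbounded convex $\phi$ would require such a step). Second, your appeal to equality of marginals to obtain $E[S] = E[S']$ is superfluous: taking $\phi(s) = s$ and $\phi(s) = -s$, both convex, yields equality of means directly from the conclusion itself. The trade-off between the two approaches is the usual one: the paper's citation is economical and leans on standard references, while your version makes the result self-contained and exhibits the precise mechanism (convexity composed with the coordinate sum yields supermodularity) that justifies the orderings used throughout Section \ref{sec:stochastic-order}.
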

\begin{proof}
	See Theorem 8.3.3 of \cite{muller2002comparison} or Proposition 6.3.9 of \cite{denuit2006actuarial}. 
\end{proof}
It follows from Proposition \ref{prop:order-cx-s} and Theorem \ref{thm:supermodular-i-u} that one may order the aggregate rvs $S$ and $S'$ within the context of the current paper if one first orders the random vectors $\boldsymbol{X}$ and $\boldsymbol{X}'$. In the remainder of this section, we investigate the implications of this fact. 

While the upper bound under the supermodular order for multivariate Bernoulli random vectors is well-known (see the EPD FGM copula further in this section), its lower bound is still an open problem. For this reason, we will restrict our analysis to the class of exchangeable FGM copulas, studied in \cite{blier2021exchangeable}, for which a lower bound exists. The lower and upper bounds of the supermodular order within the families of exchangeable FGM copulas, called respectively the extreme negative dependence (END) and extreme positive dependence (EPD) satisfy 
$$\boldsymbol{U}^{END} \preceq_{sm} \boldsymbol{U} \preceq_{sm} \boldsymbol{U}^{EPD},$$
for all $\boldsymbol{U}$ with $F_{\boldsymbol{U}}$ being an exchangeable FGM copula as defined in \cite{blier2021exchangeable}. Further, $\boldsymbol{U} \preceq_{sm} \boldsymbol{U}^{EPD}$ holds for all $\boldsymbol{U}$ with $F_{\boldsymbol{U}} \in \mathcal{C}^{FGM}$. We recall the definition of the EPD FGM copula from Theorem 5 of \cite{blier-wong2022stochastic}.
\begin{theorem}\label{thm:epd}
	The FGM copula associated to the random vector $\boldsymbol{I}$ whose components are comonotonic rvs is the EPD FGM copula, denoted by $C^{EPD}$. The expression of the EPD FGM copula is given by 
	\begin{equation} \label{eq:epd}
		C^{EPD}\left(\boldsymbol{u}\right) = \prod_{k = 1}^{d} u_k \left(1 + \sum_{k = 1}^{\left\lfloor \frac{d}{2} \right\rfloor}\sum_{1\leq j_{1}<\cdots <j_{2k}\leq d} \overline{u}_{j_1}\cdots \overline{u}_{j_{2k}}\right), \quad \boldsymbol{u} \in [0,1]^{d},
	\end{equation}
	where $\lfloor y \rfloor$ is the floor function returning the greatest integer smaller or equal to $y$. The $k$-dependence parameters are $\theta_{k} = (1 + (-1)^k)/2$, for $k \in \{2, \dots, d\}$.  
\end{theorem}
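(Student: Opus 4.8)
The plan is to work directly from the parametrization \eqref{eq:theta_from_I} of Theorem \ref{thm:stochastic-representation}, which expresses each FGM parameter as a central mixed moment of $\boldsymbol{I}$, and to evaluate it when $\boldsymbol{I}$ is comonotonic. The first step is to pin down what comonotonicity means for symmetric Bernoulli marginals: since comonotonic rvs admit the representation $I_k = F_I^{-1}(U)$ for a single common uniform $U$ and the one marginal cdf $F_I$, and since the generalized inverse $F_I^{-1}(u)$ equals $0$ for $u \le 1/2$ and $1$ for $u > 1/2$, all the components coincide almost surely. Thus the vector collapses to $I_1 = \cdots = I_d = I$ with $I$ a single symmetric Bernoulli rv.

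Next I would substitute this into \eqref{eq:theta_from_I}. For any index set $\{j_1, \dots, j_k\}$ the product $\prod_{n=1}^k (I_{j_n} - 1/2)$ becomes $(I - 1/2)^k$, whose expectation is elementary: $I - 1/2$ takes the values $\pm 1/2$ each with probability $1/2$, so $E_{\boldsymbol{I}}[(I - 1/2)^k] = (1 + (-1)^k)/2^{k+1}$. Multiplying by $(-2)^k = (-1)^k 2^k$ and simplifying yields $\theta_{j_1 \dots j_k} = (1 + (-1)^k)/2$, which depends only on the cardinality $k$ and not on the chosen indices. This establishes the claimed exchangeable parameters $\theta_k = (1 + (-1)^k)/2$.

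I would then read off the copula expression by inserting these values into the natural representation \eqref{eq:fgm-copula}. Since $\theta_k = 1$ when $k$ is even and $\theta_k = 0$ when $k$ is odd, every odd-order term vanishes and every even-order coefficient equals one; reindexing the surviving even orders as $2k$ with $k$ ranging from $1$ to $\lfloor d/2 \rfloor$ reproduces exactly \eqref{eq:epd}. To justify the label ``EPD'', I would invoke that the comonotonic vector is the Fréchet upper bound in $\mathcal{B}_d$ and hence the supermodular maximum over all symmetric Bernoulli vectors; by Theorem \ref{thm:supermodular-i-u} this maximality transfers to the associated copula, so $C^{EPD}$ is the $\preceq_{sm}$-largest FGM copula.

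I expect no genuine obstacle: every step is a short deterministic computation. The only points requiring care are the identification of the comonotonic symmetric Bernoulli vector as the degenerate ``all equal'' vector, and the bookkeeping of the even/odd split of $(1 + (-1)^k)/2$ when passing from the moment computation to the explicit sum in \eqref{eq:epd}.
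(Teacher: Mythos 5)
Your proposal is correct. Note that the paper itself offers no internal proof of this theorem: it is recalled verbatim from Theorem 5 of \cite{blier-wong2022stochastic}, so there is nothing in the present text to compare against step by step. Your argument is the natural self-contained verification and is sound throughout: comonotonicity within $\mathcal{B}_d$ does force $I_1 = \cdots = I_d$ almost surely (identical marginals plus the common-uniform representation, with $F_I^{-1}(u) = \mathbbm{1}_{(1/2,1]}(u)$); the moment computation $E\left[(I - 1/2)^k\right] = \left(1 + (-1)^k\right)/2^{k+1}$ is right, and the residual factor $(-1)^k$ after multiplying by $(-2)^k$ is indeed harmless since the expression vanishes for odd $k$; and the even/odd reindexing recovers \eqref{eq:epd} exactly. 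Two small points deserve explicit mention: first, you should note that the resulting parameters automatically lie in $\mathcal{T}_d$ because Theorem \ref{thm:stochastic-representation} guarantees that \emph{every} $F_{\boldsymbol{I}} \in \mathcal{B}_d$ induces a valid FGM copula, so no separate admissibility check is needed; second, your justification of the label ``EPD'' rests on the fact that the comonotonic vector is the supermodular maximum of its Fréchet class --- a classical result (see, e.g., \cite{muller2002comparison}) that the paper likewise takes as known in Section \ref{sec:stochastic-order} --- combined with Theorem \ref{thm:supermodular-i-u}, which is precisely the right chain of reasoning.
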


The END FGM copula is derived in \cite{blier2021exchangeable} and recalled in the following theorem.
\begin{theorem}
	The expression of the FGM END copula, denoted by $C^{END}$, is given by 
	\begin{equation} \label{eq:end}
		C^{END}\left(\boldsymbol{u}\right) = \prod_{k = 1}^{d} u_k \left(1 + \sum_{k = 1}^{\left\lfloor \frac{d}{2} \right\rfloor}\sum_{1\leq j_{1}<\cdots <j_{2k}\leq d}
		\frac{\Gamma(k+1)\Gamma\left(\frac{1}{2} -\left\lfloor  \frac{d + 1}{2}\right\rfloor\right)}{2^k \Gamma\left(\frac{k}{2}+1\right) \Gamma\left(\frac{k+1}{2} - \left\lfloor \frac{d + 1}{2}\right\rfloor\right)}
		\overline{u}_{j_1}\cdots \overline{u}_{j_{2k}}\right), \quad \boldsymbol{u} \in [0,1]^{d}.
	\end{equation}
	That is, the $k$-dependence parameters for the FGM END copula are given by
	\begin{equation}\label{eq:end-param}
		\theta_k = {}_2F_1\left(-\left\lfloor \frac{d + 1}{2}\right\rfloor, -k, 2\left\lfloor \frac{d + 1}{2}\right\rfloor, 2\right) = \frac{(1 + (-1)^k)}{2}\frac{\Gamma(k+1)\Gamma\left(\frac{1}{2} -\left\lfloor  \frac{d + 1}{2}\right\rfloor\right)}{2^k \Gamma\left(\frac{k}{2}+1\right) \Gamma\left(\frac{k+1}{2} - \left\lfloor \frac{d + 1}{2}\right\rfloor\right)},
	\end{equation}
	for $k \in \{2, \dots, d\}$ and where ${}_2F_1$ is the ordinary hypergeometric function. 
\end{theorem}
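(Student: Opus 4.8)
The plan is to work entirely through the stochastic representation of Theorem \ref{thm:stochastic-representation}, which reduces the problem to computing the central mixed moments in \eqref{eq:theta_from_I} of a single, explicitly identified symmetric Bernoulli vector $\boldsymbol{I}$. Since the copula is exchangeable, $\theta_{j_1 \ldots j_k}$ depends only on $k$, and I would first rewrite \eqref{eq:theta_from_I} in terms of the sign variables $\varepsilon_n = 2I_n - 1 \in \{-1, 1\}$, which gives $\theta_k = (-1)^k E[\varepsilon_1 \cdots \varepsilon_k]$ since $I_n - 1/2 = \varepsilon_n/2$. This form makes two things transparent: any sign-symmetric law (one with $\boldsymbol{\varepsilon} \overset{\mathcal{D}}{=} -\boldsymbol{\varepsilon}$) annihilates the odd-order moments, producing the factor $(1+(-1)^k)/2$; and the EPD computation of Theorem \ref{thm:epd} is recovered instantly as a sanity check, because a comonotone $\boldsymbol{I}$ has all $\varepsilon_n$ equal, so $E[\varepsilon_1\cdots\varepsilon_k] = (1+(-1)^k)/2$.

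Next I would identify the Bernoulli vector attaining the lower bound. Writing $N_d = \sum_k I_k$, the exchangeable members of $\mathcal{B}_d$ are in one-to-one correspondence with laws of $N_d$ of mean $d/2$, and within this subclass the supermodular order coincides with the convex order of $N_d$; the forward implication $\boldsymbol{I}\preceq_{sm}\boldsymbol{I}' \Rightarrow N_d \preceq_{cx} N_d'$ is standard, while the converse is established in \cite{blier2021exchangeable}. The supermodular-minimal vector therefore concentrates $N_d$ as tightly as the integer lattice permits around $d/2$: for even $d$ this forces $N_d = d/2$ almost surely, and for odd $d$ it forces $N_d$ uniform on $\{(d-1)/2, (d+1)/2\}$. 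In both cases $\boldsymbol{I}$ is a uniformly random placement of $N_d$ ones among the $d$ coordinates, and both laws are sign-symmetric, so $\theta_k = 0$ for odd $k$, as claimed.

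For the even-order parameters I would condition on $N_d = n$, under which $(I_1, \ldots, I_k)$ is multivariate hypergeometric; grouping terms by the number $j$ of ones among the first $k$ coordinates gives
\[
	E\left[\varepsilon_1 \cdots \varepsilon_k \mid N_d = n\right] = \sum_{j = 0}^{k} (-1)^{k-j}\,\frac{\binom{k}{j}\binom{d - k}{n - j}}{\binom{d}{n}}.
\]
Because even $k$ makes $\prod_n \varepsilon_n$ invariant under the swap of zeros and ones, the two conditional values for odd $d$ coincide, so in every case $\theta_k$ equals $(-1)^k$ times this sum evaluated at $n = n^* := \lfloor (d+1)/2 \rfloor$. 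Rewriting the sum with Pochhammer symbols identifies it as the terminating series ${}_2F_1\left(-n^*, -k, 2n^*, 2\right)$, and the Gauss/reflection identities for $\Gamma$ collapse it to the closed form in \eqref{eq:end-param}; inserting the resulting $\theta_k$ (nonzero only for even $k$) into the exchangeable FGM copula and reindexing $k \mapsto 2k$ yields \eqref{eq:end}. I would verify the bookkeeping on $k = 2$, $d$ even, where the sum gives $\theta_2 = -1/(d-1)$, matching both the hypergeometric and the $\Gamma$-function expressions.

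The main obstacle is the first half of the second step: justifying that maximal concentration of $N_d$ is genuinely the supermodular-least exchangeable symmetric Bernoulli vector, i.e. that convex minimality of the count transfers back to supermodular minimality of the vector. This converse is precisely what \cite{blier2021exchangeable} supplies and is the nontrivial ingredient; the remainder is the hypergeometric manipulation needed to recognize the conditional moment as a ${}_2F_1$ and simplify it, which is routine but where sign and index slips are easiest to make.
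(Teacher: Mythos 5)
The paper offers no internal proof of this theorem: it explicitly recalls the result from \cite{blier2021exchangeable}, so your blind attempt is measured against that derivation rather than against any argument printed here. Your reconstruction is correct and follows what is, in substance, the intended route. The reduction $\theta_k = (-1)^k E[\varepsilon_1 \cdots \varepsilon_k]$ with $\varepsilon_n = 2I_n - 1$ is right, and it cleanly explains both the vanishing of odd-order parameters under sign symmetry and the EPD sanity check. Your identification of the supermodular-minimal exchangeable element is also right: exchangeable laws in $\mathcal{B}_d$ are parametrized by the law of $N_d$ with $E[N_d] = d/2$, the convex-order minimum over integer-supported laws with non-integer (resp.\ integer) mean is the uniform two-point law on the adjacent integers $\{(d-1)/2,(d+1)/2\}$ (resp.\ the Dirac mass at $d/2$), and you correctly isolate the transfer from convex minimality of $N_d$ back to supermodular minimality of $\boldsymbol{I}$ as the one genuinely nontrivial ingredient that must be imported from \cite{blier2021exchangeable}; combined with Theorem \ref{thm:supermodular-i-u} this gives minimality at the copula level. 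The hypergeometric conditional moment $E[\varepsilon_1\cdots\varepsilon_k \mid N_d = n] = \sum_{j}(-1)^{k-j}\binom{k}{j}\binom{d-k}{n-j}/\binom{d}{n}$ and the complementation argument equating the two atoms for even $k$ are both correct; one can confirm the whole pipeline via the generating-function form $\sum_j (-1)^j \binom{k}{j}\binom{d-k}{n-j} = [x^n](1-x)^k(1+x)^{d-k}$, which for $k = 2$ indeed yields $\theta_2 = -1/(d-1)$, agreeing with the $\Gamma$-expression in \eqref{eq:end-param}.

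One concrete caveat on the final step: with the standard convention ${}_2F_1(a,b;c;z) = \sum_m \frac{(a)_m (b)_m}{(c)_m}\frac{z^m}{m!}$, the third parameter in \eqref{eq:end-param} must be $-2\lfloor (d+1)/2 \rfloor$, not $+2\lfloor (d+1)/2 \rfloor$, for the identity to hold. For $d = 4$, $k = 2$ one computes ${}_2F_1(-2,-2;4;2) = 17/5$, whereas ${}_2F_1(-2,-2;-4;2) = -1/3 = -1/(d-1)$, which is what your sum and the $\Gamma$-function expression give (similarly $-1/5$ for $d = 6$). So your stated $k = 2$ verification matches the combinatorial sum and the $\Gamma$-form, but cannot match the ${}_2F_1$ exactly as printed; this is evidently a sign typo in the recalled statement rather than a flaw in your strategy, but in "identifying the sum as the terminating series ${}_2F_1(-n^*,-k,2n^*,2)$" you have echoed the typo instead of deriving the correct parameterization, and the proof should state the corrected third argument (or, equivalently, present the sum in the native form $\binom{d-k}{n^*}\binom{d}{n^*}^{-1}{}_2F_1(-k,-n^*;d-k-n^*+1;-1)$ and transform from there).
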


\begin{example}
	Consider a vector $\boldsymbol{X}$ with joint cdf $F_{\boldsymbol{X}}(\boldsymbol{x}) = C(F(x_1), \dots, F(x_d))$, $\boldsymbol{x} \in \mathbb{R}^d_+$, where $F(x) = 1 - e^{-\beta x}$ and $C$ is an exchangeable FGM copula. We denote the aggregate rv for a portfolio of $d$ risks as $S_d$, and omit the subscript when $d$ is arbitrary. In this example, we study the special cases of $S$ which lead to the lower bound and the upper bound under the convex order for exchangeable FGM copulas, respectively denoted $S^{-}$ and $S^{+}$, along with the aggregate rv under the assumption of independence, denoted $S^{\perp}$. Note that $E[S^{-}] = E[S] = E[S^{+}]$, $Var(S^{-}) \leq Var(S) \leq Var(S^{+})$ and $\mathrm{TVaR}_{\kappa}(S^{-}) \leq \mathrm{TVaR}_{\kappa}(S) \leq \mathrm{TVaR}_{\kappa}(S^{+})$, for all $\kappa \in (0, 1)$ and for all $S$ constructed within the setup of this example. By using the representation in \eqref{eq:S-representation-sum} and Theorem \ref{prop:lst-s}, the LST of $S^+$ is
	$$\laplace{S_d^+}{t} = \frac{1}{2}\left(\frac{2\beta}{2\beta +t}\right)^d + \frac{1}{2}\left(\frac{\beta}{\beta +t}\frac{2\beta}{2\beta +t}\right)^d, \quad t \geq 0,$$
	while the LST of $S^{-}$ is
	$$\laplace{S_d^-}{t} = \begin{cases}
		\left(\frac{2\beta}{2\beta + t}\right)^d \left(\frac{\beta}{\beta + t}\right)^{d/2}, & d \text{ is even}\\
		\left(\frac{2\beta}{2\beta + t}\right)^d \left(\frac{1}{2}\left(\frac{\beta}{\beta + t}\right)^{(d-1)/2} + \frac{1}{2}\left(\frac{\beta}{\beta + t}\right)^{(d+1)/2}\right), & d \text{ is odd}\\
	\end{cases}.$$
	Both LSTs correspond to the LST of mixed Erlang distributions. Using an optimization tool, we obtain the values for the VaR, then we compute the TVaR. To simplify comparisons, we introduce the rv $W_d = S_d / d$. We present the values of VaR and TVaR for $W_d$ in Table \ref{tab:var-tvar-cv}. We present the results for $d\in \{1, 2, 10, 100, 1000\}$ and $\kappa \in \{0.9, 0.99, 0.999\}$. We compute every risk measure with $\beta = 0.1$, that is, $E[X] = 10$, $E[S_d] = 10\times d$ and $E[W_d] = 10$. 
	\begin{table}[ht]
		\centering
		\caption{VaR and TVaR of $W_d$ with the END, independent and EPD copulas.}\label{tab:var-tvar-cv}
	\begin{tabular}{lrrrrrrrrr}
		& \multicolumn{3}{c}{$\kappa = $ 0.9} & \multicolumn{3}{c}{$\kappa = $ 0.99} & \multicolumn{3}{c}{$\kappa = $ 0.999} \\
		\cmidrule(r{4pt}){2-4} \cmidrule(l){5-7}\cmidrule(l){8-10} 
		                                   & $END$ & $Ind$ & $EPD$ & $END$ & $Ind$ & $EPD$ & $END$ & $Ind$ & $EPD$ \\ \midrule
		$\text{VaR}\left(W_{1}\right)$     & 23.03 & 23.03 & 23.03 & 46.05 & 46.05 & 46.05 & 69.08 & 69.08 & 69.08 \\
		$\text{TVaR}\left(W_{1}\right)$    & 33.03 & 33.03 & 33.03 & 56.05 & 56.05 & 56.05 & 79.08 & 79.08 & 79.08 \\ \hline
		$\text{VaR}\left(W_{2}\right)$     & 18.09 & 19.45 & 20.90 & 29.91 & 33.19 & 35.55 & 41.46 & 46.17 & 48.86 \\ 
		$\text{TVaR}\left(W_{2}\right)$    & 23.25 & 25.47 & 27.37 & 34.93 & 38.85 & 41.36 & 46.47 & 51.66 & 54.43 \\ \hline
		$\text{VaR}\left(W_{10}\right)$    & 13.63 & 14.21 & 17.85 & 17.58 & 18.78 & 23.19 & 20.95 & 22.66 & 27.40 \\
		$\text{TVaR}\left(W_{10}\right)$   & 15.38 & 16.24 & 20.26 & 19.06 & 20.48 & 25.05 & 22.31 & 24.20 & 29.04 \\ \hline
		$\text{VaR}\left(W_{100}\right)$   & 11.13 & 11.30 & 15.93 & 12.14 & 12.47 & 17.39 & 12.92 & 13.38 & 18.44 \\
		$\text{TVaR}\left(W_{100}\right)$  & 11.58 & 11.83 & 16.60 & 12.48 & 12.87 & 17.86 & 13.21 & 13.72 & 18.82 \\ \hline
		$\text{VaR}\left(W_{1000}\right)$  & 10.35 & 10.41 & 15.30 & 10.65 & 10.75 & 15.74 & 10.87 & 11.01 & 16.04 \\
		$\text{TVaR}\left(W_{1000}\right)$ & 10.49 & 10.56 & 15.50 & 10.75 & 10.86 & 15.87 & 10.95 & 11.10 & 16.15 \\ \hline
	\end{tabular}
	\end{table}
	Let us examine the effect of dependence on the risk measures. We note the TVaR for dependence structure $m$ as $TVaR^{m}_{\kappa}(W_{d})$, for $m \in \{END, Ind, EPD\}$ and $\kappa = 0.9$. We aim to compute the relative effect of dependence for different portfolio sizes. For $d = 2$, we have 
	$$
	\frac{\mathrm{TVaR}^{END}_{0.9}(W_2) - \mathrm{TVaR}^{Ind}_{0.9}(W_2)}{\mathrm{TVaR}^{Ind}_{0.9}(W_2)} = -0.0870;
	\quad
	\frac{\mathrm{TVaR}^{EPD}_{0.9}(W_2) - \mathrm{TVaR}^{Ind}_{0.9}(W_2)}{\mathrm{TVaR}^{Ind}_{0.9}(W_2)} = 0.0744,$$ 
	while, for $d = 1000,$ we have 
	$$
	\frac{\mathrm{TVaR}^{END}_{0.9}(W_{1000}) - \mathrm{TVaR}^{Ind}_{0.9}(W_{1000})}{\mathrm{TVaR}^{Ind}_{0.9}(W_{1000})} =  -0.0072;
	\quad
	\frac{\mathrm{TVaR}^{EPD}_{0.9}(W_{1000}) - \mathrm{TVaR}^{Ind}_{0.9}(W_{1000})}{\mathrm{TVaR}^{Ind}_{0.9}(W_{1000})} = 0.4671.$$ 
	The most negative relative effect of dependence appears for $d = 2$ with the END copula and decreases as the portfolio size $d$ increases. This isn't surprising, as the impact of the negative dependence structure decreases when the dimension $d$ increases, see \cite{blier2021exchangeable}. The most positive relative effect of dependence appears for the EPD copula and is an increasing function of $d$, that is, increasing the portfolio size with the EPD copula increases the relative effect of dependence on the TVaR. 
\end{example}

\section{Sum of discrete rvs}\label{sec:discrete}

This section deals with discrete rvs. We first provide preliminary results for the order statistics of discrete rvs, then present an efficient algorithm to compute the pmf of the aggregate rv. Let $\boldsymbol{X} = (X_1, \dots, X_d)$ be a vector of discrete rvs whose joint cdf is defined with a FGM copula. We note $p_{k, j} = \Pr(X_k = j)$, for $k\in \{1, \dots, d\}$ and $j\in \mathbb{N}_0$. The pmf of the minimum and maximum of two iid discrete rvs are 
\begin{align}
	p_{k, j, [1]} &:= \Pr(\min(X_{k, 1}, X_{k, 2}) = j) = 2p_j \sum_{m = j + 1}^{\infty}p_m  + p_j^2= 2p_j \left(1 - \sum_{m = 0}^{j}p_m\right)+ p_j^2;\label{eq:pmf-min}\\
	p_{k, j, [2]} &:= \Pr(\max(X_{k, 1}, X_{k, 2}) = j) = 2p_j \sum_{m = 0}^{j-1}p_m + p_j^2 =2p_j \sum_{m = 0}^{j}p_m - p_j^2. \label{eq:pmf-max}
\end{align}
The identity $p_{j, k} = \left(p_{k, j, [1]} + p_{k, j, [2]}\right)/2$ holds for all $k\in\{1, \dots, d\}$ and $j\in\mathbb{N}_0$. Few discrete distributions admit neat representations for their order statistics; we illustrate this point with two examples. 
\begin{example}\label{ex:geom}
	Let $X \sim Geom(q)$ with $\Pr(X = j) = q(1-q)^{j}$ and $\Pr(X > j) = (1-q)^{j+1}$, for $j \in \mathbb{N}_0$. Then, $\Pr(X_{[1]} > j) = (1 - q)^{2j+2}$ and we conclude that $X_{[1]} \sim Geom(1 - (1-q)^2)$. One has $\Pr(X_{[2]} \leq j) = 1 - 2(1-q)^{j+1} - (1-q)^{2j+2}$. Letting $q^* = 1 - (1-q)^2$, it follows that 
	\begin{align*}
		\mathcal{P}_{X_{[i]}}(t) = (-1)^{i-1}\left(\frac{q^*}{1 - (1-q^*)t} - \frac{q}{1 - (1-q)t}\right) + \frac{q}{1 - (1-q)t}, \quad i \in \{1, 2\}, 
	\end{align*}
	for $|t| \leq 1$. Consider two identically distributed rvs $X_1$ and $X_2$ which follow geometric distributions, where $F_{X_1, X_2}(x_1, x_2) = C(F_{X_1}(x_1), F_{X_2}(x_2))$, for $(x_1, x_2) \in \mathbb{N}_0^2$ and $C \in \mathcal{C}_2^{FGM}$. Then, the pgf of $S = X_1 + X_2$ is 
	\begin{align*}
		\mathcal{P}_{S}(t) &= E_{I}\left[\prod_{k = 1}^{2}\left\{(-1)^{I_k}\left(\frac{q^*}{1 - (1-q^*)t} - \frac{q}{1 - (1-q)t}\right) + \frac{q}{1 - (1-q)t}\right\}\right]\\
		&= \left(\frac{q}{1 - (1-q)t}\right)^2 + \theta_{12} \left\{\frac{q^*}{1 - (1-q^*)t} - \frac{q}{1 - (1-q)t}\right\}^2\\
		&= (1+\theta_{12})\left(\frac{q}{1 - (1-q)t}\right)^2 + \theta_{12}\left(\frac{q^*}{1 - (1-q^*)t}\right)^2 - 2\theta_{12}\frac{q^*}{1 - (1-q^*)t}\frac{q}{1 - (1-q)t},
	\end{align*}
	which is the pgf of a mixture of three rvs: the first two follow negative binomial distributions and the third one is the sum of two independent geometric rvs with different success probabilities. Since there are no simple formulas for the pmf of the third rv, we do not have a simple formula for the pmf of $S$, although one may show that $S$ follows a mixture of Pascal distributions (studied in, for instance, \cite{furman2007convolution, mi2008some, zhao2010ordering, badescu2015modeling}) and, more generally, a matrix-geometric distribution (see \cite{bladt2017matrixexponential}).	
\end{example}
\begin{example}\label{ex:poisson}
	When $X \sim Pois(\lambda)$, \eqref{eq:pmf-min} and \eqref{eq:pmf-max} respectively become
	\begin{align*}
		p_{k, [1]} &= \frac{2\lambda^{k} e^{-\lambda}}{{k}!}\left\{1 - \frac{\Gamma(k + 1, \lambda)}{k !}\right\} + \left(\frac{\lambda^k e^{-\lambda}}{k!}\right)^2;\\
		p_{k, [2]} &= \frac{2\lambda^{k} e^{-\lambda}}{{k}!}\frac{\Gamma(k, \lambda)}{(k - 1)!} \times 1_{\{k \geq 1\}}+ \left(\frac{\lambda^k e^{-\lambda}}{k!}\right)^2,
	\end{align*}
	for $k \in \mathbb{N}_0$, where $\Gamma(x, \lambda)$ is the upper incomplete Gamma function, that is, 
	$\Gamma(x, \lambda) = \int_{x}^{\infty} t^{x-1}e^{-t} \diff t.$ There does not seem to have elegant representations for the sum of rvs $X_{[1]}$ and $X_{[2]}$ when $X$ follows a Poisson distribution. 
	
%
%
%
%
%
%
%
%
%
\end{example}

Examples \ref{ex:geom} and \ref{ex:poisson} show that convenient forms for the pmfs of order statistics for discrete rvs aren't trivial. However, one can still compute the exact values of the pmf of $S$. Using the same arguments as in the proof of Theorem \ref{prop:lst-s}, the pgf of $S$ for discrete marginal rvs and a dependence structure induced by a FGM copula is 
\begin{equation}\label{eq:pgf-s}
	\mathcal{P}_{S}(t) = E_{\boldsymbol{I}}\left[\prod_{k = 1}^d \mathcal{P}_{X_{k, [I_k + 1]}}(t)\right].
\end{equation}
It follows that the representation in \eqref{eq:pgf-s} enables an algorithmic approach to find the pmf of $S$ for discrete rvs. Suppose there is a number $\omega \in \mathbb{N}_0$ such that $p_{k, \ell} = 0$, for all $k \in \{1, \dots, d\}$ and $\ell \geq \omega$. The discrete Fourier transform of $S$ forms a vector $\boldsymbol{\phi}_{S}$ with elements 
$\phi_{S, j} = \mathcal{P}_{S}(\exp\{-2\pi \mathrm{i} j/(d \times \omega)\})$, for $j = 0, \dots, d \times \omega - 1$. Therefore, the values of the pmf of $S$ are given by
\begin{equation}\label{eq:pmf-s-fft}
	p_{S, j} = \frac{1}{d \times \omega} \sum_{k = 0}^{d \times \omega - 1}E_{\boldsymbol{I}}\left[\prod_{n = 1}^d \mathcal{P}_{X_{n, [I_n + 1]}}\left(  \exp\{-2\pi \mathrm{i} k/(d \times \omega)\}   \right)\right] \exp\{2\pi \mathrm{i} kj / (d \times \omega)\},
\end{equation}
for $j = 0, \dots, d\times \omega - 1$. Based on \eqref{eq:pmf-s-fft}, we propose Algorithm \ref{algo:pmf-s} to compute the pmf for $S$ when margins are discrete. 

\begin{algorithm}
	\KwIn{Values of $p_{k, j}, j = 0, \dots, \omega-1, k = 1, \dots, d$, table $f_{\boldsymbol{I}}$}
	\KwOut{pmf of $S$}
	\nl \For{$k = 1, \dots, d$}{
		\nl Set $\boldsymbol{p}_k = (p_{k, 0}, \dots, p_{k, \omega - 1}, 0, \dots, 0) \in [0, 1]^{d\times \omega}$\;
		\nl Compute $\boldsymbol{P}_k$ as the cumulative sum of $\boldsymbol{p}_k$\;
		\nl Compute $\boldsymbol{P}_{k, [2]} = \boldsymbol{P}_{k}^2$ (element-wise)\;
		\nl Compute $\boldsymbol{p}_{k, [2]}$ as the difference vector of $\boldsymbol{P}_{k, [2]}$\;
		\nl Compute $\boldsymbol{p}_{k, [1]} = 2 \times \boldsymbol{p}_k - \boldsymbol{p}_{k, [2]}$ (element-wise) \;
		\nl Use fft to compute the discrete Fourier transform $\boldsymbol{\phi}_{k, [1]}$ of $\boldsymbol{p}_{k, [1]}$\;
		\nl Use fft to compute the discrete Fourier transform $\boldsymbol{\phi}_{k, [2]}$ of $\boldsymbol{p}_{k, [2]}$\;
	}
	\nl Compute $\boldsymbol{\phi}_{S} = \sum_{\boldsymbol{i} \in \{0, 1\}^d} f_{\boldsymbol{I}}(\boldsymbol{i})\prod_{k = 1}^{d}\boldsymbol{\phi}_{k, [i_k + 1]}$ (element-wise)\;
	\nl Use fft to compute the inverse discrete Fourier transform $\boldsymbol{p}_{S}$ of $\boldsymbol{\phi}_{S}$\;
	\nl Return $\boldsymbol{p}_{S}$.
	\caption{Computing the pmf of $S$.} \label{algo:pmf-s}
\end{algorithm}

\section{Approximation methods}\label{sec:approx}

As stated in Section \ref{sec:continuous}, the cdf or moments of the aggregate rv has a convenient form when the margins of the random vector are closed under order statistics. However, this is not the case for most continuous distributions. Fortunately, one may discretize continuous rvs into discrete rvs, and use the numerical tools provided in Section \ref{sec:discrete} to study the approximate behaviour of $S$. Some approximation methods are provided in \cite{embrechts2009panjer} or appendix E of \cite{klugman2018loss}. 

Since discretization is an approximation method, one wishes to obtain upper and lower bounds for the true cdf and risk measures of the aggregate rv in order to quantify the accuracy of the approximation. To construct these bounds we will require the following univariate stochastic order. 
\begin{definition}[Usual stochastic order]
	Let $Y$ and $Y'$ be two rvs with $\overline{F}_{Y}(x) \geq \overline{F}_{Y'}(x)$ for all $x \in \mathbb{R}$. Then, we say that $Y$ is smaller than $Y'$ under the usual stochastic order, and denote this relation by $Y \preceq_{st} Y'$. 
\end{definition}
Implications of the usual stochastic order between two rvs $Y$ and $Y'$ are that $E[Y] \leq E[Y']$, $\text{VaR}_{\kappa}(Y) \leq \text{VaR}_{\kappa}(Y')$ for all $\kappa \in (0, 1)$ and $E[\phi(Y)] \leq E[\phi(Y')]$ for all increasing function $\phi$, assuming that the expectations exist (including the TVaR). Within the context of the current paper, if we may construct cdfs for two rvs $A$ and $B$ such that $A \preceq_{st} S \preceq_{st} B$, then we may construct bounds on $VaR$s and certain expected values. One approach is to use the upper and lower methods, defined next. 

\begin{definition}[Lower method]
	The probability mass function of a discretized rv $\tilde{X}^{(l, h)}$, under the \emph{lower} method is, for $h > 0$, 
	$$\begin{cases}
		f_{\tilde{X}^{(l, h)}}(0) = 0&\\
		f_{\tilde{X}^{(l, h)}}(jh) = F_{X}(jh) - F_{X}((j-1)h), & \text{ for } j \in \mathbb{N}_1 .
	\end{cases}$$
\end{definition}

\begin{definition}[Upper method]
	The probability mass function of a discretized rv $\tilde{X}^{(u, h)}$ under the \emph{upper} method is, for $h > 0$, 
	$$\begin{cases}
		f_{\tilde{X}^{(u, h)}}(0) = F_{X}(h)&\\
		f_{\tilde{X}^{(u, h)}}(jh) = F_{X}((j+1)h) - F_{X}(jh), & \text{ for } j \in \mathbb{N}_1.
	\end{cases}$$
\end{definition}
Note that the authors of \cite{embrechts2009panjer} call the lower and upper methods, respectively, the backward and forward differences. From the definitions of the lower and upper methods, we have (see Section 1.11 of \cite{muller2002comparison}) that, for $0 < h < h' < \infty$,
\begin{equation}\label{eq:compare-discrete}
	\tilde{X}^{(u, h')} \preceq_{st} \tilde{X}^{(u, h)} \preceq_{st} X \preceq_{st} \tilde{X}^{(l, h)} \preceq_{st} \tilde{X}^{(l, h')}.
\end{equation}
It is useful to construct bounds as in \eqref{eq:compare-discrete} for the aggregate rv $S$. To do so, we first discretize the cdfs of each marginal distribution, in particular, the cdfs of the order statistics of each marginal. 

\begin{remark}\label{prop:lower-upper-order-stat}
	Let $X_1$ and $X_2$ be independent copies of a positive rv $X$ with cdf $F_X$. Let $X_{[1]} = \min(X_1, X_2)$ and $X_{[2]} = \max(X_1, X_2)$. Then,
	$$F_{\tilde{X}_{[1]}^{(m, h)}}(x) = 1 - (1 - F_{\tilde{X}^{(m, h)}}(x))^2; \quad 
	F_{\tilde{X}_{[2]}^{(m, h)}}(x) = F_{\tilde{X}^{(m, h)}}(x)^2,$$
	for $m \in \{l, u\}$ and $x \geq 0$. That is, one can compute the cdf of $\tilde{X}_{[1]}$ and $\tilde{X}_{[2]}$ using the definitions of the lower and upper discretization methods or first discretize the rv $X$ and then compute the cdf using the relationships in \eqref{eq:cdf-pdf-min} and \eqref{eq:cdf-pdf-max}. 
\end{remark}

We need the following proposition.
\begin{proposition}\label{prop:order-x-multivariate-usual}
	Let $\boldsymbol{X}$ be a random vector with cdf $F_{\boldsymbol{X}}(\boldsymbol{x}) = C(F_{X_1}(x_1), \dots, F_{X_d}(x_d)),$
	for $\boldsymbol{x} \in \mathbb{R}^d_+$ and $C \in \mathcal{C}_d^{FGM}$. Define the discretized random vectors $\tilde{\boldsymbol{X}}^{(m, h)} := (\tilde{X}_1^{(m, h)}, \dots, \tilde{X}_d^{(m, h)})$ for $m \in \{l, u\}$. Then, for $0 < h < h' < \infty$, we have
	$$\tilde{\boldsymbol{X}}^{(u, h')} \preceq_{st} \tilde{\boldsymbol{X}}^{(u, h)} \preceq_{st} \boldsymbol{X} \preceq_{st} \tilde{\boldsymbol{X}}^{(l, h)} \preceq_{st} \tilde{\boldsymbol{X}}^{(l, h')}.$$
\end{proposition}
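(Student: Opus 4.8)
The plan is to reduce the multivariate claim to univariate comparisons by exploiting the stochastic representation of Theorem \ref{thm:stochastic-sum}, in which the \emph{same} symmetric multivariate Bernoulli vector $\boldsymbol{I}$ drives $\boldsymbol{X}$ and each discretized vector. Writing $\boldsymbol{X} = (\boldsymbol{1}-\boldsymbol{I})\boldsymbol{X}_{[1]} + \boldsymbol{I}\boldsymbol{X}_{[2]}$ and $\tilde{\boldsymbol{X}}^{(m,h)} = (\boldsymbol{1}-\boldsymbol{I})\tilde{\boldsymbol{X}}_{[1]}^{(m,h)} + \boldsymbol{I}\tilde{\boldsymbol{X}}_{[2]}^{(m,h)}$ for $m \in \{l,u\}$, I would condition on $\boldsymbol{I} = \boldsymbol{i}$. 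Conditionally, the $k$th component of each vector is a single order statistic ($X_{k,[i_k+1]}$ or $\tilde{X}_{k,[i_k+1]}^{(m,h)}$), and these components are mutually independent across $k$. This is precisely the structure under which the multivariate usual stochastic order is tractable.

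Two standard closure properties of $\preceq_{st}$ then carry the argument. First, for random vectors with independent components, $\boldsymbol{Y} \preceq_{st} \boldsymbol{Y}'$ holds as soon as each marginal satisfies $Y_k \preceq_{st} Y'_k$; applying this conditionally on $\boldsymbol{I} = \boldsymbol{i}$ reduces the problem to univariate comparisons of order statistics. Second, $\preceq_{st}$ is closed under mixtures over a common mixing variable: since the identical $\boldsymbol{I}$ governs both sides of every inequality in the chain, the conditional orderings lift to the unconditional multivariate orderings. It therefore suffices to establish, for each fixed $k$ and each $j \in \{1,2\}$, the univariate chain $\tilde{X}_{k,[j]}^{(u,h')} \preceq_{st} \tilde{X}_{k,[j]}^{(u,h)} \preceq_{st} X_{k,[j]} \preceq_{st} \tilde{X}_{k,[j]}^{(l,h)} \preceq_{st} \tilde{X}_{k,[j]}^{(l,h')}$.

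The univariate step rests on the observation that the order-statistic transformations preserve $\preceq_{st}$. If $Y \preceq_{st} Y'$, i.e. $F_Y \leq F_{Y'}$ pointwise, then since $u \mapsto u^2$ is increasing on $[0,1]$ we get $F_{Y_{[2]}} = F_Y^2 \leq F_{Y'}^2 = F_{Y'_{[2]}}$, hence $Y_{[2]} \preceq_{st} Y'_{[2]}$; and since $\overline{F}_{Y_{[1]}} = \overline{F}_Y^2 \geq \overline{F}_{Y'}^2 = \overline{F}_{Y'_{[1]}}$, also $Y_{[1]} \preceq_{st} Y'_{[1]}$. By Remark \ref{prop:lower-upper-order-stat}, the discretized order statistics are exactly these same maps applied to the discretized marginal $\tilde{X}_k^{(m,h)}$, so applying the preservation property to the univariate discretization chain \eqref{eq:compare-discrete} for $X_k$ yields the required chain for each order statistic.

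The proof is largely an assembly of known facts, so the main thing to be careful about is the shared-randomization structure: the mixture-closure step is valid only because the construction of Theorem \ref{thm:stochastic-sum} uses one and the same $\boldsymbol{I}$ for $\boldsymbol{X}$ and for all discretized vectors, and because conditioning on $\boldsymbol{I}$ renders the coordinates independent. Equivalently, one can phrase the whole argument as a single coupling: take a common uniform $U_k$ for each coordinate (quantile-coupling the five rvs of the chain \eqref{eq:compare-discrete}) together with a common draw of $\boldsymbol{I}$, realizing all five vectors on one probability space with almost-sure componentwise ordering, and then invoke the characterization of $\preceq_{st}$ through such couplings to conclude directly.
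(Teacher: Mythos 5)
Your proof is correct, but it takes a genuinely different route from the paper's. The paper's proof is two lines: it combines the univariate chain \eqref{eq:compare-discrete} for each margin with the observation that $\boldsymbol{X}$ and the four discretized vectors all share the same copula $C$, and then invokes Theorem 4.1 of \cite{muller2001stochastic} (common copula plus componentwise $\preceq_{st}$ implies multivariate $\preceq_{st}$). You avoid that external theorem by exploiting the FGM-specific structure: conditioning on the common Bernoulli vector $\boldsymbol{I}$ of Theorem \ref{thm:stochastic-sum} makes the coordinates independent, so the problem reduces to ordering the conditional marginals, i.e.\ the order statistics; your observation that the maps $F \mapsto F^2$ and $\overline{F} \mapsto \overline{F}^{\,2}$ preserve $\preceq_{st}$, together with closure of $\preceq_{st}$ under independent components and under mixtures over the common $\boldsymbol{I}$ (the same $f_{\boldsymbol{I}}$ appears on both sides of each comparison precisely because the copula is the same), finishes the argument. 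What each approach buys: the paper's argument is shorter and works verbatim for an arbitrary common copula, not just FGM; yours is self-contained and elementary, and your closing coupling --- quantile-coupling the five rvs coordinatewise with a shared uniform and a shared $\boldsymbol{I}$ --- is in effect a re-derivation of the M\"uller--Scarsini lemma in the special case at hand, since the standard proof of that lemma is exactly the coupling $X_k = F_{X_k}^{-1}(V_k)$ with $\boldsymbol{V} \sim C$.

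Two minor points, neither of which invalidates the argument. First, your reduction presumes that $\tilde{\boldsymbol{X}}^{(m,h)}$ is defined as the vector with discretized marginals and the \emph{same} FGM copula $C$ (equivalently, the same $\boldsymbol{I}$); this is indeed the paper's intent --- its own proof asserts the five vectors share the same copula --- but your mixture step depends on it, so it deserves an explicit statement. Second, your parenthetical ``$Y \preceq_{st} Y'$, i.e., $F_Y \leq F_{Y'}$ pointwise'' matches the paper's Definition as literally written, but that definition contains a sign typo: under the standard convention, $Y \preceq_{st} Y'$ means $\overline{F}_Y \leq \overline{F}_{Y'}$, i.e., $F_Y \geq F_{Y'}$. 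Since you flip every subsequent inequality consistently, the conclusions $Y_{[j]} \preceq_{st} Y'_{[j]}$ for $j \in \{1,2\}$ hold under either convention.
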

\begin{proof}
    From \eqref{eq:compare-discrete}, we have that
    $\tilde{X}_k^{(u, h')} \preceq_{st} \tilde{X}_k^{(u, h)} \preceq_{st} X_k \preceq_{st} \tilde{X}_k^{(l, h)} \preceq_{st} \tilde{X}_k^{(l, h')}$, for $k = 1, \dots, d$. Further, the random vectors $\tilde{\boldsymbol{X}}^{(u, h')}$, $\tilde{\boldsymbol{X}}^{(u, h)}$, $\boldsymbol{X}$, $\tilde{\boldsymbol{X}}^{(l, h)}$ and $\tilde{\boldsymbol{X}}^{(l, h')}$ share the same copula, hence we obtain the desired result by applying Theorem 4.1 of \cite{muller2001stochastic} (see also Theorem 3.3.8 of \cite{muller2002comparison}).
\end{proof}
Define the aggregate rv of the discretized marginals with the upper and lower methods, that is, $\tilde{S}^{(u, h)}$ and $\tilde{S}^{(l, h)}$ by
$\tilde{S}^{(u, h)} = \tilde{X}^{(u, h)}_1 + \dots + \tilde{X}^{(u, h)}_d$ and $\tilde{S}^{(l, h)} = \tilde{X}^{(l, h)}_1 + \dots + \tilde{X}^{(l, h)}_d$. Since the usual stochastic order is preserved under monotone transformations (see Theorem 3.3.11 of \cite{muller2002comparison}), it follows from Proposition \ref{prop:order-x-multivariate-usual} that 
\begin{equation}\label{eq:order-s}
	\tilde{S}^{(u, h')} \preceq_{st} \tilde{S}^{(u, h)} \preceq_{st} S \preceq_{st}\tilde{S}^{(l, h)} \preceq_{st}\tilde{S}^{(l, h')},
\end{equation}
for $0 < h \leq h'$. The relationship in \eqref{eq:order-s} is useful since one can construct bounds on risk measures, in particular, construct upper and lower bounds on the stop-loss premiums, on the Value-at-Risk and on the Tail Value-at-Risk. For the latter, setting $0 < \kappa < 1,$ we have
\begin{equation}\label{eq:tvar-s}
	\text{TVaR}_{\kappa}\left(\tilde{S}^{(u, h')}\right) \leq \text{TVaR}_{\kappa}\left(\tilde{S}^{(u, h)}\right) \leq \text{TVaR}_{\kappa}\left(S\right) \leq \text{TVaR}_{\kappa}\left(\tilde{S}^{(l, h)}\right) \leq \text{TVaR}_{\kappa}\left(\tilde{S}^{(l, h')}\right).
\end{equation}

In the following example, we consider a portfolio of log-normal risks. Note that there are no closed-form expressions for the cdf of the minimum of two log-normal distributions, but one may still approximate the cdf with Algorithm \ref{algo:pmf-s}, and the relation in \eqref{eq:order-s} provides bounds on the tail value at risk of the aggregate distribution.

\begin{example}
	Consider a portfolio of $n = 3$ risks and $X_k \sim LNorm(\mu_k, \sigma_k)$ for $k = 1, 2, 3$. We set $(\mu_k, \sigma_k)$, $k \in \{1, 2, 3\}$ such that $E[X_k] = 10$ for $k = 1, 2, 3$ and $Var(X_1) = 20$, $Var(X_2) = 50$ and $Var(X_3) = 100$. The dependence structure is induced by a Markov-Bernoulli FGM copula, as introduced in \cite{blier-wong2022stochastic}, whose expression is
	\begin{equation*} 
		C\left( \boldsymbol{u}\right) = \prod_{k = 1}^{d}u_k\left(1 + \sum_{k = 1}^{\left\lfloor \frac{d}{2}\right\rfloor} \sum_{1\leq j_1 < \dots < j_{2k} \leq d}\alpha ^{\gamma_{j_1 \dots j_{2k}}}\overline{u}_{j_1}\dots \overline{u}_{j_{2k}}\right), \quad \boldsymbol{u} \in [0, 1]^d,
	\end{equation*}
	where $\gamma_{j_1 \dots j_{2k}} = \sum_{l = 1}^{k} \left(j_{2l} - j_{2l-1}\right)$ and dependence parameter satisfies $\alpha \in [-1, 1]$. For this example, we select the dependence parameter $\alpha = 0.5$. We aim to approximate the cdf of $S = X_1 + X_2 + X_3$ through discretization methods and using Algorithm \ref{algo:pmf-s}. 
	
	Figure \ref{fig:disc-lnorm} presents the cdf of $\tilde{S}^{(m, h)}$ for $m \in \{l, u\}$ and $h \in \{0.5, 1, 2\}$. Clearly, the relationship in \eqref{eq:order-s} is satisfied, it follows that the cdf of the continuous aggregate rv is between the green (lower method) and blue (upper method) curves. Table \ref{tab:tvar-discretization} presents the values of the TVaR risk measure at levels $\kappa \in \{0.9, 0.99, 0.999\}$ for the rvs $\tilde{S}^{(m, h)}$ for $m \in \{l, u\}$ and $h \in \{0.1, 0.5, 1, 2\}$. The relationship in \eqref{eq:tvar-s} is also satisfied. One can state, therefore, that $60.32 \leq \text{TVaR}_{0.9}(S) \leq 60.73$, without ever knowing the true cdf of $S$. Also, one may decrease the range between the lower and upper bounds by selecting a smaller span $h$ at the cost of more computations. For instance, selecting $h = 0.01$ yields an interval 
	$60.56 \leq \text{TVaR}_{0.9}(S) \leq 60.59$, but the computation time goes from 0.01 seconds for $h = 0.1$ to 64 seconds for $h = 0.01$.
		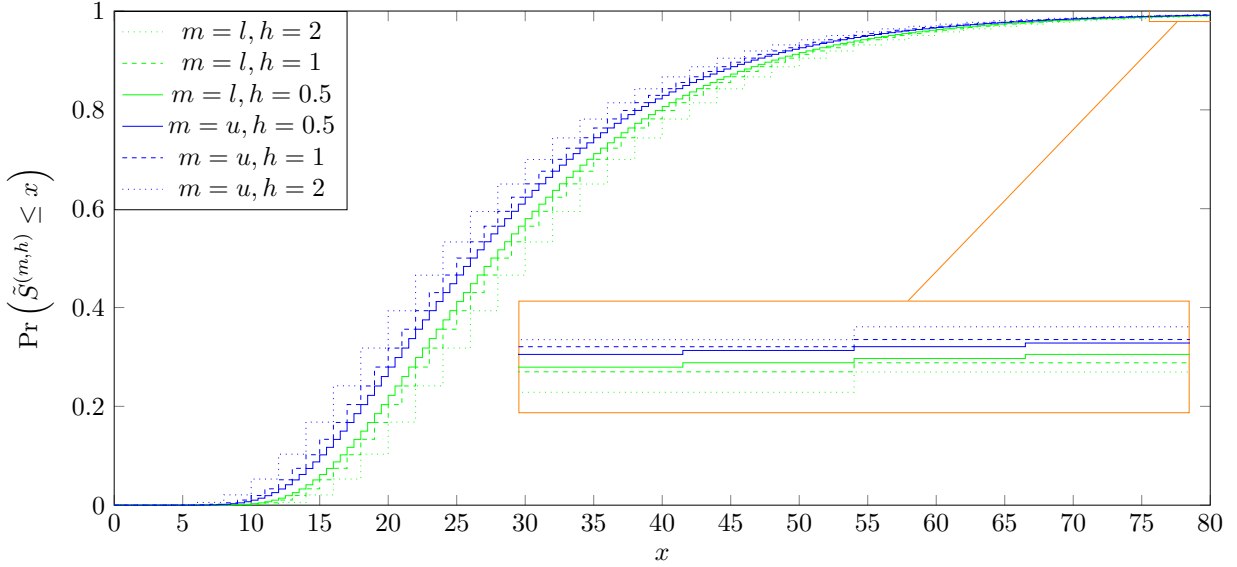
\begin{figure}[ht]
		\centering
		\resizebox{\textwidth}{!}{
		\begin{tikzpicture}
			\begin{axis}[
				width = 7in, 
				height = 3.5in,
				ymin = 0,
				xmin = 0, 
				ymax = 1, 
				xmax = 80,
				xlabel={$x$},
				ylabel={$\Pr\left(\tilde{S}^{(m, h)} \leq x\right)$}, 
				legend style={at={(0,1)},anchor=north west},
					]
				\addplot[const plot, dotted, green] table [x="x", y="FsL", col sep=comma] {code/Fs_upper_lower_3.csv};
				\addlegendentry{$m = l, h = 2$}
				\addplot[const plot, dash pattern=on 2pt off 2pt, green] table [x="x", y="FsL", col sep=comma] {code/Fs_upper_lower_2.csv};
				\addlegendentry{$m = l, h = 1$}
				\addplot[const plot, green] table [x="x", y="FsL", col sep=comma] {code/Fs_upper_lower_1.csv};
				\addlegendentry{$m = l, h = 0.5$}
				
				\addplot[const plot, blue] table [x="x", y="FsU", col sep=comma] {code/Fs_upper_lower_1.csv};
				\addlegendentry{$m = u, h = 0.5$}
				\addplot[const plot, dash pattern=on 2pt off 2pt, blue] table [x="x", y="FsU", col sep=comma] {code/Fs_upper_lower_2.csv};
				\addlegendentry{$m = u, h = 1$}
				\addplot[const plot, dotted, blue] table [x="x", y="FsU", col sep=comma] {code/Fs_upper_lower_3.csv};
				\addlegendentry{$m = u, h = 2$}
				
				\newcommand*\spyfactor{5}
				\newcommand*\spypoint{axis cs:78, 0.99}
				\newcommand*\spyviewer{axis cs:54, 0.3}
				\node[rectangle, draw, minimum height=0.01\textwidth, minimum width=0.06\textwidth, inner sep=0pt, orange] (spypoint) at (\spypoint) {};
				\node[rectangle, draw, minimum height=0.1\textwidth, minimum width=0.6\textwidth, inner sep=0pt, orange] (spyviewer) at (\spyviewer) {};
				\draw[orange] (spypoint) edge (spyviewer);
				\begin{scope}
					\clip (spyviewer.south west) rectangle (spyviewer.north east);
					\begin{scope}[scale around={\spyfactor:($(\spyviewer)!\spyfactor^2/(\spyfactor^2-1)!(\spypoint)$)}, yscale=1]
						\addplot[const plot, dotted, green] table [x="x", y="FsL", col sep=comma] {code/Fs_upper_lower_3.csv};
						\addplot[const plot, dash pattern=on 2pt off 2pt, green] table [x="x", y="FsL", col sep=comma] {code/Fs_upper_lower_2.csv};
						\addplot[const plot, green] table [x="x", y="FsL", col sep=comma] {code/Fs_upper_lower_1.csv};
						\addplot[const plot, blue] table [x="x", y="FsU", col sep=comma] {code/Fs_upper_lower_1.csv};
						\addplot[const plot, dash pattern=on 2pt off 2pt, blue] table [x="x", y="FsU", col sep=comma] {code/Fs_upper_lower_2.csv};
						\addplot[const plot, dotted, blue] table [x="x", y="FsU", col sep=comma] {code/Fs_upper_lower_3.csv};
					\end{scope}
				\end{scope}		
			\end{axis}
		\end{tikzpicture}}
		\caption{Cumulative distribution function for different discretization methods.}\label{fig:disc-lnorm}
	\end{figure}
	\begin{table}[ht]
		\centering
		\caption{Values of TVaR from different discretization methods.}\label{tab:tvar-discretization}
		\begin{tabular}{lrrrrrrrr}
			& \multicolumn{4}{c}{Upper} & \multicolumn{4}{c}{Lower} \\
			\cmidrule(r{4pt}){2-5} \cmidrule(l){6-9}
			$h$              &      2 &      1 &    0.5 &    0.1 &    0.1 &    0.5 &      1 &      2 \\ \hline
			$\kappa = 0.9$   &  57.60 &  59.08 &  59.83 &  60.43 &  60.73 &  61.33 &  62.08 &  63.60 \\
			$\kappa = 0.99$  &  92.65 &  94.13 &  94.88 &  95.48 &  95.78 &  96.38 &  97.13 &  98.65 \\
			$\kappa = 0.999$ & 142.93 & 144.42 & 145.16 & 145.76 & 146.06 & 146.66 & 147.42 & 148.93 \\ \hline
		\end{tabular}
	\end{table}
\end{example}

\section{Risk allocation and risk sharing}\label{sec:allocation}

It is natural, in the context of risk management, to study the impact of aggregating risks in an insurance portfolio or pool. To do so, we will study allocation rules, a problem related to aggregation of rvs. Allocations have actuarial applications in peer-to-peer insurance and in regulatory capital allocation. Throughout this section, we consider once again a portfolio of $d$ risks, each of which follow mixed Erlang distributions with common rate parameter. The dependence structure is once again induced by a FGM copula. From Section \ref{ss:mx-erlang-sum}, we know that the aggregate rv is also mixed Erlang distributed. 

The expected allocation, given by the expression 
$$E\left[X_m \times 1_{\{S = s\}}\right] = \int_{0}^{s} x f_{X_m, S_{-m}}(x, s-x) \diff x,$$
for $s \geq 0$, where $S_{-m} = \sum_{k = 1, k \neq m}^{d}X_k$, is a preliminary tool to develop allocation rules. The following proposition presents the expected allocation for mixed Erlang distributions with FGM dependence. 

\begin{proposition}\label{lemma:expected-allocation}
Let $\boldsymbol{X}$ be a random vector with cdf $F_{\boldsymbol{X}}(\boldsymbol{x}) = C(F_{X_1}(x_1), \dots, F_{X_d}(x_d))$, where $F_{X_k}$ is the cdf of a mixed Erlang rv, for $k = 1,\dots,d$. Further, assume that $F_{X_1}, \dots, F_{X_k}$ share the same rate parameter, and that $C \in \mathcal{C}_d^{FGM}$. For $m \in \{1, \dots, d\}$, the expected allocation is given by 
	\begin{align}
		E\left[X_m \times 1_{\{S = s\}}\right] &= \sum_{\boldsymbol{i} \in \{0, 1\}^d} f_{\boldsymbol{I}} (\boldsymbol{i}) \left[\sum_{l = 2}^{\infty} \sum_{\ell = 1}^{l-1} \Pr\left(L_{m, \{i_m + 1\}} = \ell\right)\times \vphantom{\Pr\left(\sum_{k = 1, k \neq m}^{d} L_{k, \{i+i_k\}} = l -  \ell\right)} \right. \nonumber\\
		& \left.\qquad \qquad \qquad \qquad \Pr\left(\sum_{k = 1, k \neq m}^{d} L_{k, \{i_k + 1\}} = l -  \ell\right) \frac{\ell}{2\beta} h(s; l + 1, 2\beta)\right],\label{eq:expected-allocation-mxerl}
	\end{align}
	where $h(s; \alpha, \beta)$ is the pdf associated with an Erlang distribution, for $s \geq 0$. Further, we have 
	\begin{align}
		E\left[X_m \times 1_{\{S > s\}}\right] &= \sum_{\boldsymbol{i} \in \{0, 1\}^d} f_{\boldsymbol{I}} (\boldsymbol{i}) \left[\sum_{l = 2}^{\infty} \sum_{\ell = 1}^{l-1} \Pr\left(L_{m, \{1+i_m\}} = \ell\right)\times \vphantom{\Pr\left(\sum_{k = 1, k \neq m}^{d} L_{k, \{i+i_k\}} = l -  \ell\right)} \right. \nonumber\\
		& \left.\qquad \qquad \qquad \qquad \Pr\left(\sum_{k = 1, k \neq m}^{d} L_{k, \{i+i_k\}} = l -  \ell\right) \frac{\ell}{2\beta} \overline{H}(s; l + 1, 2\beta)\right],\label{eq:expected-cumul-allocation-mxerl}
	\end{align}
	where $\overline{H}(s; \alpha, \beta) = 1 - H(s; \alpha, \beta)$, for $s \geq 0$.
\end{proposition}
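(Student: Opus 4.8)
The plan is to condition on the latent Bernoulli vector $\boldsymbol{I}$ and exploit the conditional independence supplied by Theorem~\ref{thm:stochastic-sum}. Given $\boldsymbol{I} = \boldsymbol{i}$, the representation \eqref{eq:X-based-on-I} identifies the components of $\boldsymbol{X}$ with the \emph{independent} rvs $X_{k, [i_k+1]}$; in particular $X_m$ and $S_{-m} = \sum_{k \neq m} X_k$ become conditionally independent. Writing the density-type allocation as an expectation conditioned on $\boldsymbol{I}$ and using this factorization, I would first obtain
\begin{equation*}
	E\left[X_m 1_{\{S = s\}}\right] = \sum_{\boldsymbol{i} \in \{0,1\}^d} f_{\boldsymbol{I}}(\boldsymbol{i}) \int_0^s x\, f_{X_m \mid \boldsymbol{I} = \boldsymbol{i}}(x)\, f_{S_{-m} \mid \boldsymbol{I} = \boldsymbol{i}}(s - x) \diff x .
\end{equation*}

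Next I would invoke the mixed Erlang closure properties recalled in Section~\ref{ss:mx-erlang-sum}. By the result of \cite{landriault2015note}, each conditional rv $X_m \mid \boldsymbol{I} = \boldsymbol{i}$ is mixed Erlang with rate $2\beta$ and count rv $L_{m, \{i_m+1\}}$, so its density equals $\sum_{\ell \geq 1} \Pr(L_{m, \{i_m+1\}} = \ell)\, h(x; \ell, 2\beta)$. Because a sum of independent mixed Erlang rvs sharing the rate $2\beta$ is again mixed Erlang with count equal to the convolution of the individual counts, $S_{-m} \mid \boldsymbol{I} = \boldsymbol{i}$ is mixed Erlang with rate $2\beta$ and count $\sum_{k \neq m} L_{k, \{i_k+1\}}$. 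Substituting both mixture representations reduces everything to the single integral $\int_0^s x\, h(x; \ell, 2\beta)\, h(s-x; n, 2\beta) \diff x$ for fixed shapes $\ell$ and $n$.

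The crux of the argument is evaluating this integral in closed form, and I expect this to carry the real content while the rest is bookkeeping. The key identity is $x\, h(x; \ell, 2\beta) = \tfrac{\ell}{2\beta}\, h(x; \ell+1, 2\beta)$, obtained directly from the Erlang density, which shifts the shape up by one and extracts the prefactor $\ell/(2\beta)$. Combining it with the standard Erlang convolution $\int_0^s h(x; \ell+1, 2\beta)\, h(s-x; n, 2\beta) \diff x = h(s; \ell+n+1, 2\beta)$ yields $\tfrac{\ell}{2\beta}\, h(s; \ell+n+1, 2\beta)$. Reindexing the resulting double sum by $l = \ell + n$ (so that $n = l - \ell$, which forces $\ell \in \{1, \dots, l-1\}$ and $l \geq 2$) then rearranges the terms into precisely the shape $l+1$ and the summation ranges appearing in \eqref{eq:expected-allocation-mxerl}.

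Finally, the cumulative version \eqref{eq:expected-cumul-allocation-mxerl} follows by integrating over $s$: a short Fubini argument gives $E[X_m 1_{\{S > s\}}] = \int_s^\infty E[X_m 1_{\{S = u\}}] \diff u$. Since $\int_s^\infty h(u; l+1, 2\beta) \diff u = \overline{H}(s; l+1, 2\beta)$ and the probability weights do not depend on $s$, one simply replaces each Erlang density $h(s; l+1, 2\beta)$ in \eqref{eq:expected-allocation-mxerl} by the survival function $\overline{H}(s; l+1, 2\beta)$, leaving the remainder of the expression unchanged.
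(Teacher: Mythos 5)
Your proposal is correct and follows essentially the same route as the paper: condition on $\boldsymbol{I}$ so that $(X_m, S_{-m})$ becomes a mixture of pairs of independent mixed Erlang rvs with common rate $2\beta$ and counts $L_{m,\{i_m+1\}}$ and $\sum_{k \neq m} L_{k,\{i_k+1\}}$, then reduce to the integral $\int_0^s x\, f_{X_{m,\{1+i_m\}}}(x)\, f_{\sum_{k\neq m} X_{k,\{1+i_k\}}}(s-x)\,\mathrm{d}x$ and integrate over $(s,\infty)$ for the cumulative version. The only difference is that where the paper delegates the evaluation of that integral to Propositions 4 and 5 of \cite{cossette2012tvarbased}, you verify it directly via the shape-shift identity $x\, h(x;\ell,2\beta) = \tfrac{\ell}{2\beta}\, h(x;\ell+1,2\beta)$, the Erlang convolution, and the reindexing $l = \ell + n$, which makes the argument self-contained.
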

\begin{proof}	
    From the joint LST in \eqref{eq:lst-x}, we condition on $\boldsymbol{I}$ to notice that the bivariate random vector $(X_m, S_{-m})$ is a mixture of independent bivariate random vectors. The bivariate LST of $(X_m, S_{-m})$ is
	\begin{align*}
		\mathcal{L}_{X_m, S_{-m}}(t_1, t_2) &= E_{\boldsymbol{I}}\left[\mathcal{P}_{L_{m, \{1+I_m\}}}\left(\frac{2\beta}{2\beta + t_1}\right) \prod\limits_{k=1, k \neq m}^{d} \mathcal{P}_{L_{k, \{1+I_k\}}}\left(\frac{2\beta}{2\beta + t_2}\right)\right]\\
		&= \sum_{\boldsymbol{i} \in \{0, 1\}^d} f_{\boldsymbol{I}} (\boldsymbol{i}) \mathcal{P}_{L_{m, \{1+i_m\}}}\left(\frac{2\beta}{2\beta + t_1}\right) \prod\limits_{k=1, k \neq m}^{d} \mathcal{P}_{L_{k, \{1+i_k\}}}\left(\frac{2\beta}{2\beta + t_2}\right),
	\end{align*}
	for $(t_1, t_2) \in \mathbb{R}_+^2$. Then, the expected allocation is
	\begin{align*}
		E\left[X_m \times 1_{\{S = s\}}\right] &= \int_{0}^{s} x f_{X_m, S_{-m}}(x, k-x) \diff x\\
		&= \int_{0}^{s} x \left[  \sum_{\boldsymbol{i} \in \{0, 1\}^d} f_{\boldsymbol{I}} (\boldsymbol{i}) f_{X_m, \{1 + i_m\}}(x) f_{\sum_{k = 1, k \neq m}^d X_{k, \{1 + i_k\}}}(s - x)  \right] \diff x\\
		&= \sum_{\boldsymbol{i} \in \{0, 1\}^d} f_{\boldsymbol{I}} (\boldsymbol{i}) \left[\int_{0}^{s} x f_{X_{m, \{1 + i_m\}}}(x) f_{\sum_{k = 1, k \neq m}^d X_{k, \{1 + i_k\}}}(s - x)\diff x\right].
	\end{align*}
    The result in \eqref{eq:expected-allocation-mxerl} follows from Propositions 4 and 5 of \cite{cossette2012tvarbased} since each integral is an expectation from a pair of independent mixed Erlang rvs. Finally, \eqref{eq:expected-cumul-allocation-mxerl} follows from integrating \eqref{eq:expected-allocation-mxerl} on the interval $(s, \infty)$. 
\end{proof}

We demonstrate the usefulness of Proposition \ref{lemma:expected-allocation} in the following subsections. 

\subsection{Conditional mean risk sharing}

The rise of peer-to-peer insurance has ignited a lot of interest in risk allocation and risk-sharing rules. A participant to a pool of insurance risk should pay a contribution relative to the risk he contributes to the pool, hence one seeks fair risk sharing rules to determine this value, see, for instance, \cite{denuit2019sizebiased}, \cite{denuit2020investing} or \cite{denuit2021risksharing} for discussions. The conditional mean risk sharing is one such rule, where the participant pays his expected contribution, given the total realized losses (denoted $s$, with $s \geq 0$) in the pool, that is, 
$$E\left[X_k \vert S = s\right] = \frac{E\left[X_k \times 1_{\{S = s\}}\right]}{f_{S}(s)},$$
see \cite{denuit2012convex}, \cite{denuit2021risk} for details and properties of the conditional mean risk sharing rule. When each risk is mixed Erlang distributed and the dependence structure is induced by a FGM copula, the numerator is given by Proposition \ref{lemma:expected-allocation} and, taking $q_{S, j}$ for $j \in \mathbb{N}_1$ as defined in \eqref{eq:cdf-mx-erl}, the denominator is
$$f_S(s) = \sum_{j = 1}^\infty q_{S, j} h(s; j, 2\beta),$$
for $s \geq 0$. It follows that the conditional mean risk sharing rule has a convenient closed-form expression.

\begin{example}
\label{ex:SixRisksMixedErlang}
    We consider a portfolio of six risks where each follows a mixed Erlang distribution with common rate parameter $1/2$ with cdfs
	$$F_{X_k}(s) = \Pr(X_k \leq s) = \sum_{j = 1}^{\infty} q_{k, j} H(s; j, 1/2),$$
	for $k \in \{1, \dots, 6\}$, and 
	$$\begin{cases}
		q_{1, j} = 1 \times 1_{\{j = 1\}}; \\
		q_{2, j} = \frac{1}{2}^{j} \times 1_{\{j \in \mathbb{N}_1\}}; \\
		q_{3, j} = 5^{j - 1} e^{-5}/(j-1)! \times 1_{\{j \in \mathbb{N}_1\}}; \\
		q_{4, j} = \Gamma(j-1+2)/\Gamma(2)/(j-1)! 0.25^2 (0.75)^{j-1} \times 1_{\{j \in \mathbb{N}_1\}};\\
		q_{5, j} = 10^{j - 1} e^{-10}/(j-1)! \times 1_{\{j \in \mathbb{N}_1\}}; \\
		q_{6, j} = \Gamma(j-1+3)/\Gamma(3)/(j-1)! 0.2^3 (0.8)^{j-1} \times 1_{\{j \in \mathbb{N}_1\}}.
	\end{cases}$$
	For convenience, we artificially construct vectors of probabilities whose masses correspond to known discrete distributions (Dirac, geometric, Poisson, negative binomial); this will help us control the shape of the marginal distributions. Notice that the risks are highly heterogeneous, since $\{q_{6, j}, j\in \mathbb{N}_1\}$ comes from a distribution with a heavier tail than $\{q_{1, j}, j\in \mathbb{N}_1\}$. Also note that $E[X_4] < E[X_5]$, but $Var(X_4) > Var(X_5)$. In Table \ref{tab:summary-ex-allocation}, we present the values of risk measures individually applied to each risk. 
	\begin{table}[ht]
		\centering
		\caption{Summary description for marginal rvs.}\label{tab:summary-ex-allocation}
		\begin{tabular}{lrrrrrr}
			$k$                       &         1 &        2 &        3 &         4 &        5 &         6 \\ \hline
			$E[X_k]$                  &  2 &  4 & 12 &  14 & 22 &  26 \\
			$Var(X_k)$                &  4 & 16 & 44 & 124 & 84 & 292 \\
			$\text{VaR}_{0.99}(X_k)$  &  9.21 & 18.42 & 31.44 &  50.86 & 47.45 &  79.72 \\
			$\text{TVaR}_{0.99}(X_k)$ & 11.21 & 22.42 & 35.40 &  59.90 & 52.30 &  92.03
		\end{tabular}
	\end{table}
	
	For every dependence structure, we have $E[S] = 80$. In Table \ref{tab:cmrs-mx-erl}, we present the realizations of random vectors under the conditional mean risk sharing rule, when the aggregate rv $S$ takes either the value of $E[S]/2$, $E[S]$, or $2\times E[S]$. 
	We observe a surprising pattern that, to the best of our knowledge, has not been previously remarked upon. The rv $X_1$ ($X_6$) is the safest (riskiest), having the smallest (largest) mean, variance, VaR and TVaR at level 0.99. For $s = 40$, we observe that \emph{increasing} the dependence (according to the supermodular order) results in a decrease (increase) of the conditional mean for the rv $X_1$ ($X_6$). For $s = 160$, we observe the opposite pattern: \emph{increasing} the dependence (according to the supermodular order) results in an increase (decrease) of the conditional mean for the rv $X_1$ ($X_6$). Further, observe that when $S = s$, the smallest conditional mean for $X_2$ and $X_3$ occurs when the dependence structure is independence. 
	\begin{table}[ht]
	    \centering
	    \caption{Outcomes for risk premiums under the conditional mean risk sharing rule.}
	    \label{tab:cmrs-mx-erl}
	    \begin{tabular}{crrrrrrr}
                                    $s$	& &    \multicolumn{6}{c}{$E[X_k | S = s]$}      \\
        	\cmidrule(l){3-8}
        	                       &        & $k = 1$  & $k = 2$  &  $k = 3$  &  $k = 4$  &  $k = 5$  &  $k = 6$  \\ \hline
        	\multirow{3}{*}{$40$}  &  END   & 1.928175 & 2.987516 & 7.996234  & 5.766606  & 13.401958 & 7.919511  \\
        	                       &  IND   & 1.575428 & 2.551020 & 7.668274  & 5.699930  & 13.761121 & 8.744228  \\
        	                       &  EPD   & 0.941819 & 1.757806 & 7.136790  & 5.658961  & 14.296102 & 10.208524 \\ \hline
        	\multirow{3}{*}{$80$}  &  END   & 2.030938 & 4.123420 & 12.407195 & 13.910778 & 22.776325 & 24.751343 \\
        	                       &  IND   & 2.042401 & 4.106984 & 12.392149 & 13.867892 & 22.741896 & 24.848677 \\
        	                       &  EPD   & 2.205948 & 4.149484 & 12.499946 & 13.398856 & 22.671998 & 25.073768 \\ \hline
        	\multirow{3}{*}{$160$} &  END   & 1.721004 & 4.234335 & 13.912178 & 30.207898 & 27.145704 & 82.778881 \\
        	                       &  IND   & 2.330977 & 5.554256 & 15.892004 & 31.485783 & 29.453031 & 75.283950 \\
        	                       &  EPD   & 3.347377 & 7.541924 & 18.660443 & 32.720014 & 32.458935 & 65.271307 \\ \hline
        \end{tabular}
	\end{table}
\end{example}

\subsection{Risk allocation based on Euler's rule}

For regulatory and capital requirement purposes, one must often decompose aggregate risk measures to the individual risks that contributed to it. The TVaR is a popular risk measure since it is coherent. The TVaR of a continuous rv is also called the conditional tail expectation, see, for instance, \cite{artzner1999application} \cite{artzner1999coherent}, \cite{acerbi2001expected}, \cite{acerbi2002coherence} for motivations and properties of the conditional tail expectation for risk management. When one establishes global capital with the TVaR, one may deconstruct this risk measure to TVaR-based allocations with the help of Euler's risk allocation rule (\cite{tasche1999risk}, \cite{denault2001coherent}). For $\kappa \in (0, 1)$, the TVaR-based allocation for continuous rvs is given by $\text{TVaR}_{\kappa}(X_j; S) = E[X_j \times 1_{\{S > \text{VaR}_{\kappa}(S)\}}]/(1-\kappa)$, for $j \in \{1, \dots, d\}$. Within the context of this paper, applying Proposition \ref{lemma:expected-allocation}, we have
\begin{align}
	\text{TVaR}_{\kappa}(X_j; S) &= \frac{1}{1 - \kappa} \left\{\sum_{\boldsymbol{i} \in \{0, 1\}^d} f_{\boldsymbol{I}} (\boldsymbol{i}) \left[\sum_{l = 2}^{\infty} \sum_{\ell = 1}^{l-1} \Pr\left(L_{m, \{i_m + 1\}} = \ell\right)  \times \right.\right.\nonumber\\
	& \qquad \qquad \left.\left. \Pr\left(\sum_{k = 1, k \neq m}^{d} L_{k, \{i_k + 1\}} = l -  \ell\right) \frac{\ell}{2\beta} \overline{H}(\text{VaR}_{\kappa}(S); l + 1, 2\beta)\right]\right\},\label{eq:tvar-contribution}
\end{align}
for $j \in \{1, \dots, n\}$, where we compute $\text{VaR}_{\kappa}(S)$ with numerical optimization of \eqref{eq:cdf-mx-erl}. One may verify that $$\sum_{k = 1}^{d}\text{TVaR}_{\kappa}(X_k; S) = \text{TVaR}_{\kappa}(S).$$

The result in \eqref{eq:tvar-contribution} was also developed in equation (35) of \cite{cossette2013multivariate}, but the formula is very tedious.
By studying multivariate mixed Erlang distributions from the order statistic perspective, and the FGM copula from the stochastic representation, one has an intuitive understanding of the underlying stochastic phenomenon and obtains straightforward expressions for the TVaR and TVaR-based allocation rules. Also, \eqref{eq:tvar-contribution} uses the stochastic formulation of the FGM copula (based on the symmetric multivariate Bernoulli random vector $\boldsymbol{I}$), which is more convenient in higher dimensions since most cases of interest (for instance, minimal and maximal dependence under the supermodular order for exchangeable FGM copulas) are easier to formulate with the stochastic representation. Also, the outer sum in \eqref{eq:tvar-contribution} is a sum over $2^d$ values, which could be computationally prohibitive, but for most special cases, including minimal and maximal dependence under the supermodular order for exchangeable FGM copulas, the pmf is non-zero for few vectors of $\boldsymbol{i} \in \{0, 1\}^d$.

\begin{example} 
Consider the portfolio of six risks as introduced in Example \ref{ex:SixRisksMixedErlang}. In Table \ref{tab:tvar-allocations}, we provide values of the TVaR-based risk allocation from the expression in \eqref{eq:tvar-contribution}. 
	\begin{table}[ht]
		\centering
		\caption{Values of $\text{TVaR}_{0.99}(X_k; S)$, for $k \in \{1, \dots, 6\}$ for different $C \in \mathcal{C}_d^{FGM}$.}
		\label{tab:tvar-allocations}
		\begin{tabular}{lrrrrrrrrr}
			                   & $Var(S)$ & $\text{VaR}_{0.99}(S)$ & $\text{TVaR}_{0.99}(S)$ &     \multicolumn{6}{c}{$\text{TVaR}_{0.99}(X_k; S)$}      \\
			\cmidrule(l){5-10} &          &                        &                         & $k = 1$ & $k = 2$ & $k = 3$ & $k = 4$ & $k = 5$ & $k = 6$ \\ \hline
			END                &   452.45 &                 140.58 &                  153.41 &    1.74 &    4.26 &   13.91 &   29.10 &   27.06 &   77.35 \\
			Ind                &      564 &                 146.71 &                  160.14 &    2.33 &    5.55 &   15.87 &   31.44 &   29.41 &   75.54 \\
			EPD                &  1121.77 &                 163.57 &                  177.24 &    3.39 &    7.79 &   19.08 &   36.48 &   33.23 &   77.25
		\end{tabular}
	\end{table}
	
    As shown in Section \ref{sec:stochastic-order}, we have $\boldsymbol{U}^{END} \preceq_{sm} \boldsymbol{U}^{Ind} \preceq_{sm} \boldsymbol{U}^{EPD}$, hence the corresponding aggregate rvs are ordered according to the convex order. This fact is verified from the size of the variance and the TVaR at level 0.99. Further, we observe for $k \in \{1, \dots, 5\}$ that $\mathrm{TVaR}_{0.99}(X_k; S)$ is smallest for the END FGM copula and largest for the EPD FGM copula. However, this is not the case for $X_6$, which is the riskiest in the portfolio. The authors were surprised to observe, for the rv $X_6$, that the smallest risk contribution occurs when the dependence structure is independence, while the largest risk contribution occurs with negative dependence. Investigating why this is the case represents an interesting avenue for future research. 
\end{example}

\section{Discussions}\label{sec:conclusion}

In this paper, we revisit risk aggregation and risk allocation with the FGM copula. By studying the problem using the stochastic representation of the FGM copula, we develop convenient representations for the cdf or moments of aggregate rvs when the dependence structure is induced by a FGM copula. One significant contribution of this work over the existing literature is our ability to order aggregate rvs according to stochastic orders. 

In Section \ref{sec:continuous}, we have provided convenient closed-form expressions for cdfs and moments of the aggregate rv $S$ for positive and continuous distributions. Other closed-from expressions are possible for continuous distributions. For instance, if $X$ has a cdf that is symmetric about $x = \mu$, we have $f_{X_{[1]}}(\mu + x) = f_{X_{[2]}}(\mu - x)$ and $\mu_{X_{[1]}}^{(m)} = (-1)^{m}\mu_{X_{[2]}}^{(m)}$. For $\mu = 0$, we have $X_{[1]} \overset{\mathcal{D}}{=} -X_{[2]}.$ It follows that 
$$E\left[S^m\right] = \sum_{j_1 + \cdots + j_d = m} \frac{m!}{j_1!\dots j_d!}E_{\boldsymbol{I}}\left[\prod_{k = 1}^{d}(-1)^{j_kI_k}\mu_{X_{[1]}}^{(j_k)}\right].$$
We leave the study of risk aggregation under FGM dependence of rvs whose support is on $\mathbb{R}$ as future research. 

In Section \ref{sec:allocation}, we presented numerical illustrations of conditional mean risk sharing and risk allocation based on Euler's rule for mixed Erlang marginals. Since the results of the current paper allow for exact expressions, and that the FGM copula admits multiple shapes of dependence (including negative dependence), we are in a position to investigate examples that provide apparent counter-intuitive results that were previously unknown (to the best of our knowledge) in the literature on risk sharing. Such results lead to open questions regarding the stochastic orderings of risk-sharing rules or ordering contributions based on Euler's rule or any other capital allocation rule.

\section{Acknowledgement}

This work was partially supported by the Natural Sciences and Engineering Research Council of Canada (Blier-Wong: 559169, Cossette: 04273; Marceau: 05605).

\end{document}